\newtheorem{theorem}{Theorem}
\newtheorem{lemma}[theorem]{Lemma}
\newtheorem{proposition}[theorem]{Proposition}
\newtheorem{corollary}[theorem]{Corollary}
\newtheorem{conjecture}[theorem]{Conjecture}
\newtheorem{problem}{Problem}
\newtheorem{question}{Question}
\newtheorem{them}{Theorem}
\newtheorem{lema}[them]{Lemma}
\theoremstyle{definition}
\theoremstyle{remark}
\newtheorem{remark}[theorem]{Remark}
\begin{document}

\title[The bondage number of graphs: S-vertices]{ The bondage number of graphs on topological surfaces: 
degree-S vertices and the average degree}
\author{Vladimir Samodivkin}
\address{Department of Mathematics, UACEG, Sofia, Bulgaria}
\email{vl.samodivkin@gmail.com}
\today
\keywords{Bondage number, domination number, embedding on a surface,  
 Euler's formula, girth, average degree}

\begin{abstract}
The bondage number $b(G)$ of a graph $G$ is the smallest number 
of edges whose removal from $G$ results in a graph with larger 
domination number. 
An orientable surface $\mathbb{S}_h$ of genus $h$, $h \geq 0$, is
obtained from the sphere $\mathbb{S}_0$ by adding $h$ handles. 
A non-orientable surface $\mathbb{N}_q$ of genus $q$, $q \geq 1$, 
 is obtained from the sphere by adding $q$ crosscaps. 
 The Euler characteristic of a surface is defined by
$\chi(\mathbb{S}_h) = 2 - 2h$ and $\chi(\mathbb{S}_q)= 2-q$. 
Let $G$ be a connected graph of order $n$ which is  $2$-cell embedded 
on a surface $\mathbb{M}$ with $\chi(\mathbb{M})= \chi$. 
We prove that $b(G) \leq 7+i$ when $\mathbb{M} = \mathbb{N}_i$, $i=1,2,3$, and 
$b(G) \leq 12$ when $\mathbb{M} \in \{\mathbb{N}_4, \mathbb{S}_2\}$. 
We give new arguments that improve the known upper bounds on the bondage number
at least   when $-7\chi/(\delta(G) - 5) < n \leq -12\chi$, $\delta(G) \geq 6$,
 where $\delta(G)$ is the minimum degree of $G$. 
  We obtain sufficient conditions for the validity of the inequality $b(G) \leq 2s-2$,  
provided $G$ has degree $s$ vertices. 
In particular, we prove that if  $\delta (G) = \delta \geq 6$, $\chi  \leq -1$  
and $-14\chi  < \delta - 4 + 2(\delta -5)n$ then  $b(G) \leq 2\delta -2$.  
We show that if  $\gamma (G) = \gamma \not = 2$, where $\gamma (G)$ 
is the domination number of $G$, then 
$n \geq \gamma + (1 + \sqrt{9+8\gamma-8\chi})/2$; 
the bound is tight. 
We also present upper bounds for the bondage number of graphs 
in terms of  the girth, domination number and Euler characteristic.  
As a corollary we prove that if $\gamma(G) \geq 4$ and $\chi \leq -1$, then 
$b(G) \leq 11 - 24\chi/(9 + \sqrt{41 - 8\chi})$.
 Several unanswered questions  are posed. 
\end{abstract}

\maketitle

{\bf  MSC 2012}: 05C69

 \linenumbers

\section{Introduction}
An orientable compact 2-manifold $\mathbb{S}_h$ or orientable surface $\mathbb{S}_h$ (see \cite{Ringel}) of genus $h$ is
obtained from the sphere by adding $h$ handles. Correspondingly, a non-orientable compact
2-manifold $\mathbb{N}_q$ or non-orientable surface $\mathbb{N}_q$ of genus $q$ is obtained from the sphere by
adding $q$ crosscaps. Compact 2-manifolds are called simply surfaces throughout the paper. 
 The Euler characteristic is defined by
$\chi(\mathbb{S}_h) = 2 - 2h$, $h \geq 0$,  and $\chi(\mathbb{N}_q ) = 2 - q$, $q \geq 1$.
 The Euclidean plane $\mathbb{S}_0$, the projective plane $\mathbb{N}_1$, 
  the torus $\mathbb{S}_1$, and the Klein bottle $\mathbb{N}_2$ are 
  all the surfaces of nonnegative Euler characteristic.

We shall consider graphs without loops and multiple edges. 
A graph $G$ is embeddable on a topological surface $\mathbb{M}$ if it admits a drawing on the
surface with no crossing edges. Such a drawing of $G$ on the surface $\mathbb{M}$ is called an
embedding of $G$ on $\mathbb{M}$.  
If a graph $G$ is embedded in a surface $\mathbb{M}$ then the connected components of $\mathbb{M} - G$
are called the faces of $G$.  
For such a graph $G$, we denote its vertex set,
edge set, face set, maximum degree, and minimum degree by $V (G)$, $E(G)$, $F(G)$, $\Delta(G)$, and
$\delta(G)$, respectively. Set $|G| = |V(G)|$, $\|G\| = |E(G)|$, and $f (G) = |F(G)|$. 
An embedding of a graph $G$ on a surface $\mathbb{M}$ is said to be $2$-cell 
if every face of the embedding is homeomorphic to an open disc.
 The   Euler's inequality  states
\begin{equation} \label{eq:euler}
|G| - \|G\| + f(G) \geq \chi(\mathbb{M})
\end{equation}
 for any graph $G$ that is embedded in $\mathbb{M}$. 
 Equality holds if $G$ is $2$-cell embedded in $\mathbb{M}$. 
By the genus $h$ (the non-orientable genus $q$) of a graph $G$
 we mean the smallest integer $h$ ($q$) such that $G$ has 
 an embedding into $\mathbb{S}_h$ ($\mathbb{N}_q$, respectively). 

 The girth of a graph $G$, denoted as $g(G)$, is the length of a shortest
cycle in $G$; if $G$ is a forest then $g(G) = \infty$. 
For any vertex $x$ of a graph $G$,  $N_G(x)$ denotes the set of all  neighbors of $x$ in $G$,  $N_G[x] = N_G(x) \cup \{x\}$ and the degree of $x$ is $d_G(x) = |N_G(x)|$. 
For a subset $A \subseteq V (G)$, let $N_G(A) = \cup_{x \in A} N_G(x)$, $N_G[A] = N_G(A) \cup A$, 
 and $\left\langle A, G\right\rangle$ be the subgraph of $G$ induced by $A$.
The distance between two vertices $x, y \in V(G)$ is  denoted by $d_G(x, y)$. 
 The average degree $ad(G)$ of a graph $G$ is defined as $ad(G) = 2\|G\| / |G|$. 

An independent set  is a set of vertices in a graph, no two of which are adjacent.  
The independence number $\beta_0(G)$ of a graph $G$ 
is the size of the largest independent set in $G$.
A dominating set for a graph $G$ is a subset $D\subseteq V(G)$ of 
vertices such that every vertex not in $D$ is adjacent to
at least one vertex in $D$. The minimum cardinality of a dominating 
set is called the domination number of $G$ and is denoted by $\gamma (G)$.  
The concept of domination in graphs has many applications 
in a wide range of areas within the natural and social sciences.
It is of practical significance to consider the vulnerability of domination 
in a communication network under link failure. 
 This can be measured
by the bondage number $b(G)$ of the underlined graph $G$,
defined in \cite{BHNS,FJKR} as the smallest number of edges whose 
removal from $G$ results in a graph with larger domination number. 
We refer the reader to \cite{jmx} for a detailed survey on this topic.
In general it is $NP$-hard to determine the bondage number 
(see Hu and Xu~\cite{HuXu}), and thus useful to find bounds for it. 

The main outstanding conjecture on the bondage number is the following:
\begin{conjecture}[Teschner \cite{Teschner}] \label{con1}
For any graph $G$, $b(G) \leq \frac{3}{2}\Delta (G)$. 
\end{conjecture}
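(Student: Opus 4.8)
The target is the fully general inequality $b(G) \le \tfrac{3}{2}\Delta(G)$, with no structural hypothesis on $G$ whatsoever, so the surface machinery developed above is of no help here: an arbitrary graph embeds only on surfaces whose Euler characteristic is unboundedly negative, and for those the genus-dependent bounds are vacuous. The obstruction is therefore purely combinatorial, and my plan is to build everything on the classical local estimate: for every edge $uv \in E(G)$,
$$b(G) \le d_G(u) + d_G(v) - 1 - |N_G(u) \cap N_G(v)|,$$
which follows by deleting the edges incident with $u$ or $v$ except those witnessing a common neighbour. Writing $\Delta = \Delta(G)$ and $\delta = \delta(G)$, the whole game is to exhibit a single edge whose right-hand side does not exceed $\tfrac{3}{2}\Delta$.

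The first, easy, reduction is by the size of $\delta$. I would fix a vertex $v$ with $d_G(v) = \delta$ and any neighbour $u$; the estimate then gives $b(G) \le \Delta + \delta - 1$. Since $\Delta + \delta - 1 \le \tfrac{3}{2}\Delta$ precisely when $\delta \le \tfrac{1}{2}\Delta + 1$, the conjecture holds outright for every graph that is not ``nearly regular''. From here on I would therefore assume $\delta > \tfrac{1}{2}\Delta + 1$, which is the genuinely resistant regime.

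In that regime the only way to beat $\tfrac{3}{2}\Delta$ through the local estimate is to find a neighbour $u$ of $v$ that shares many neighbours with $v$, namely $|N_G(u) \cap N_G(v)| \ge \delta - \tfrac{1}{2}\Delta - 1$. Averaging over $N_G(v)$ is the natural device, since $\sum_{u \in N_G(v)} |N_G(u) \cap N_G(v)|$ equals twice the number of edges induced by $N_G(v)$; hence such a neighbour exists whenever the neighbourhood of $v$ is sufficiently dense, and I would choose the minimum-degree vertex whose neighbourhood maximises this internal edge count. This disposes of all locally dense nearly-regular graphs.

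The hard part, and the reason the statement is still only a conjecture, is the locally sparse nearly-regular case: graphs in which $\delta$ is close to $\Delta$ yet every neighbourhood induces very few edges. There the two-vertex local estimate is simply too weak, and one is forced to abandon pure degree counting for a global, domination-theoretic argument: choose a minimum-degree vertex $v$, delete a carefully chosen set of at most $\tfrac{3}{2}\Delta$ edges around $N_G[v]$, and prove that \emph{every} minimum dominating set of $G$ fails to dominate the resulting graph, so that $\gamma$ strictly increases. The obstacle is that a $\gamma(G)$-set may dominate $N_G[v]$ entirely from outside and may rely on private neighbours lying far from $v$, so the effect of a local deletion on a global dominating set is very hard to control, and no mechanism is known that forces the increase in $\gamma$ while keeping the deletion below $\tfrac{3}{2}\Delta$. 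Any successful argument must moreover remain tight on the extremal graphs attaining $b(G) = \tfrac{3}{2}\Delta$, which rules out slack-based estimates and is exactly why this final case has resisted proof.
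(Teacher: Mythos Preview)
The statement you were asked to prove is \emph{Conjecture~\ref{con1}} in the paper: it is explicitly labelled a conjecture, introduced as ``the main outstanding conjecture on the bondage number,'' and the paper makes no attempt to prove it in general. The paper only records partial progress due to Teschner (the cases $\gamma(G)\le 3$, Theorem~\ref{tend}) and derives further partial results for graphs on surfaces of bounded Euler characteristic. There is therefore no proof in the paper to compare your attempt against.

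Your write-up is consistent with this: you do not actually claim a proof. You correctly reduce to the nearly-regular, locally sparse case via the Hartnell--Rall edge bound $b(G)\le d_G(u)+d_G(v)-1-|N_G(u)\cap N_G(v)|$, and then candidly state that ``no mechanism is known'' to finish that case and that this ``is exactly why this final case has resisted proof.'' That is an accurate assessment of the state of the art, but it means your proposal is a discussion of the obstruction rather than a proof. If the intent was to supply a proof, the genuine gap is precisely the one you name yourself: in the regime $\delta > \tfrac{1}{2}\Delta + 1$ with sparse neighbourhoods, you give no argument that any deletion of at most $\tfrac{3}{2}\Delta$ edges forces $\gamma$ to increase, and none is currently known.
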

Hartnell and Rall \cite{hr1} and Teschner \cite{Teschner2} showed that for 
the Cartesian product $G_n = K_n \times K_n$, $n \geq 2$, the bound of
Conjecture 1 is sharp, i.e. $b(G_n) = \frac{3}{2}\Delta(G_n)$. 
Teschner \cite{Teschner} also proved that Conjecture \ref{con1} holds when 
the domination number of $G$ is not more than $3$.

The study of the bondage number of graphs, 
which are $2$-cell embeddable  on a surface having 
negative Euler characteristic was initiated  
by Gagarin and Zverovich \cite{GagarinZverovich1} and is 
continued by the same authors in \cite{GagarinZverovich2}, 
Jia Huang in \cite{Jia Huang} and the present author in \cite{samcmj}.  
All these authors obtain upper bounds for the bondage number in terms of 
 maximum degree and/or   orientable and non-orientable genus of a graph. 
 In \cite{samajc}, the present author 
   gives  upper bounds for the bondage number in terms of   order, 
    girth and Euler characteristic of a graph.  
By Theorem 10 (ii) \cite{GagarinZverovich2} or by Theorem \ref{SGZ} (ii) below,  it immediately follows 
that Conjecture \ref{con1} is true for any graph $G$ such that all the following is valid: 
(a) $G$ is $2$-cell embeddable in a surface $\mathbb{M}$ with $\chi(\mathbb{M}) < 0$, 
 (b) $|G| > -12\chi(\mathbb{M})$, and (c) $\Delta (G) \geq 8$.  

In this paper we concentrate mainly on the case when a graph $G$ 
is $2$-cell embeddable in a surface $\mathbb{M}$ and $|G| \leq -12\chi(\mathbb{M})$. 
The rest of the paper is organized as follows. Section $2$ contains preliminary results. 
In section $3$ we give new arguments that improve the known upper bounds on the bondage number
at least   when $-7\chi(\mathbb{M})/(\delta(G) - 5) < |G| \leq -12\chi(\mathbb{M})$, $\delta(G) \geq 6$.
 We propose a new type of upper bound on the bondage number of a graph.
Namely we obtain sufficient conditions for the validity of the inequality $b(G) \leq 2s-2$,  
where $G$ is a graph having degree $s$ vertices, $s \geq 5$. 
In particular, we prove that if a connected graph $G$ 
is $2$-cell embeddable in an orientable/non-orientable surface $\mathbb{M}$ 
with negative Euler characteristic then  $b(G) \leq 2\delta -2$ 
whenever $-14\chi (\mathbb{M}) < \delta(G) - 4 + 2(\delta(G) -5)|G|$ and $\delta(G) \geq 6$.
 We also improve the known upper bounds for $b(G)$ when a graph $G$ is embeddable on
at least one of $\mathbb{N}_1, \mathbb{N}_2, \mathbb{N}_3, \mathbb{N}_4$ and $\mathbb{S}_2$. 
In section $4$ we give tight lower bounds for the number of vertices of graphs 
in terms of Euler characteristic and the domination number. 
We also present upper bounds for the bondage number of graphs 
in terms of the girth, domination number and  Euler characteristic.  
As a corollary, in section $5$ we give stronger than the known 
 constant upper bounds for the bondage number of graphs having  
 domination number at least $4$.

\section{Preliminary results}
We need the following notations and definitions. 

$\bullet$\ $V_{\leq r}(G) = \{ x \in V(G)\mid d_G(x) \leq r \}, r \geq 1$,

$\bullet$\ $V_{r}(G) = \{ x \in V(G) \mid d_G(x) = r \}, r \geq 1$,

$\bullet$\  $b_1(G) = \min\{d_G(x) + d_G (y) - 1 \mid x, y \in V(G)\  \mbox{and}\  1 \leq d_G(x,y)\leq 2\},$

$\bullet$\ $b_2(G) = \min_{x,y \in V(G)}\{d_G(x) + d_G(y) - 1 - |N_G(x) \cap N_G(y)| \mid xy \in E(G)\}$, 
\newpage
$\bullet$\ $b_3(G) = \min_{x,y \in V(G)} \{\max\{d_G(x) + d_G(y) - 1 - |N_G(x) \cap N_G(y)|, \ d_G(x)$

\hspace{5cm} $ + d_G(y) - 3\}\mid xy \in E(G)\}$, 

$\bullet$\ \cite{JHJMH}\ $B(G) = \min \{b_1(G), b_2(G)\}$,

$\bullet$\ $B^{\prime}(G) = \min \{b_1(G), b_3(G)\}.$

\begin{them} \label{d2}  If $G$ is a nontrivial graph, then 
\begin{itemize}
\item[(i)]  {\rm (Hartnell and Rall~\cite{hr2})} \ $b(G) \leq b_1(G) \leq 2ad(G) -1$;
 \item[(ii)] {\rm (Hartnell and Rall~\cite{hr1})} \ $b(G) \leq b_2(G)$. 
  \end{itemize}
\end{them}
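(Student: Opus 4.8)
The plan is to prove the two ``bondage'' inequalities $b(G)\le b_1(G)$ and $b(G)\le b_2(G)$ by the classical edge-deletion strategy, deleting exactly $b_1(G)$, respectively $b_2(G)$, carefully chosen edges and showing the domination number strictly increases; the remaining inequality $b_1(G)\le 2\,ad(G)-1$ is a purely structural statement about degree sequences.

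For $b(G)\le b_1(G)$: choose vertices $x,y$ with $1\le d_G(x,y)\le 2$ realising $b_1(G)=d_G(x)+d_G(y)-1$. If $xy\in E(G)$, let $F$ be the set of all $d_G(x)+d_G(y)-1$ edges incident with $x$ or $y$; then $x$ and $y$ are isolated in $G-F$. If $d_G(x,y)=2$, fix $w\in N_G(x)\cap N_G(y)$ and let $F$ consist of all $d_G(x)$ edges at $x$ together with all edges at $y$ except $yw$, so that $|F|=d_G(x)+d_G(y)-1$ and in $G-F$ the vertex $x$ is isolated while $N_{G-F}(y)=\{w\}$. Let $D$ be a minimum dominating set of $G-F$; in every case $x\in D$. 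One checks that $D\setminus\{x\}$ dominates $G$ when $xy\in E(G)$; that $D\setminus\{x\}$ dominates $G$ in the distance-$2$ case when $w\in D$; and that $(D\setminus\{x,y\})\cup\{w\}$ dominates $G$ in the distance-$2$ case when $w\notin D$ (so $y\in D$, since $N_{G-F}[y]=\{y,w\}$). The point in each case is that a vertex isolated in $G-F$ dominates only itself, so every vertex other than $x$ keeps a dominator in $G$, while $x$ is dominated in $G$ through the one edge among $xy,xw,yw$ that was preserved. The resulting dominating set of $G$ has size $|D|-1$ (and is nonempty, since $G$ is nontrivial forces $|D|\ge 2$), so $\gamma(G)<\gamma(G-F)$ and $b(G)\le|F|=b_1(G)$.

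For $b_1(G)\le 2\,ad(G)-1$: it suffices to produce two vertices $u,v$ with $1\le d_G(u,v)\le 2$ and $d_G(u)+d_G(v)\le 2\,ad(G)$. Take $u$ with $d_G(u)=\delta(G)=:\delta$; since $ad(G)\ge\delta$, if $G$ is regular the claim is immediate, so assume $\delta<ad(G)$, and let $v$ be a vertex of smallest degree in $\bigl(N_G[u]\cup N_G(N_G(u))\bigr)\setminus\{u\}$. I would show $d_G(v)\le 2\,ad(G)-\delta$: supposing not, and choosing $u$ among all minimum-degree vertices so that this smallest degree is as large as forced, every minimum-degree vertex would have all its first and second neighbours of degree exceeding $2\,ad(G)-\delta\ge ad(G)$; hence the minimum-degree vertices would lie pairwise at distance at least $3$, their closed neighbourhoods would be pairwise disjoint, and each of their (high-degree) neighbours would be adjacent to a unique minimum-degree vertex, and double-counting $\sum_z d_G(z)=2\|G\|=n\cdot ad(G)$ against these disjoint neighbourhoods would contradict $\delta\le ad(G)$. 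This counting step, and in particular accounting for vertices far from every minimum-degree vertex, is the part I expect to require the most care.

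For $b(G)\le b_2(G)$: choose an edge $xy$ realising $b_2(G)$ and let $F$ consist of all $d_G(x)$ edges at $x$ together with every edge at $y$ except $xy$ and except the edges joining $y$ to $N_G(x)\cap N_G(y)$; thus $|F|=d_G(x)+d_G(y)-1-|N_G(x)\cap N_G(y)|=b_2(G)$. In $G-F$ the vertex $x$ is isolated and $N_{G-F}(y)=N_G(x)\cap N_G(y)$. Let $D$ be a minimum dominating set of $G-F$; then $x\in D$, and since $D$ dominates $y$ there is some $z\in D$ with $z=y$ or $z\in N_G(x)\cap N_G(y)$. As $z\ne x$ and $z\in N_G(x)$ (either $z=y$ and $xy\in E(G)$, or $xz\in E(G)$), the set $D\setminus\{x\}$ dominates $G$, every other vertex keeping a dominator because an isolated vertex of $G-F$ dominates only itself. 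Hence $\gamma(G)\le|D|-1<|D|=\gamma(G-F)$, so $b(G)\le|F|=b_2(G)$.
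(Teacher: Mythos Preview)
The paper does not actually prove this statement: Theorem~\ref{d2} is quoted as a preliminary result of Hartnell and Rall, with citations but no argument, so there is no ``paper's proof'' to compare against. Your edge-deletion arguments for $b(G)\le b_1(G)$ and $b(G)\le b_2(G)$ are correct and are exactly the standard proofs.

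The genuine gap is in your sketch for $b_1(G)\le 2\,ad(G)-1$. You work with the set $M$ of \emph{minimum}-degree vertices, and your hypothesis controls only the degrees of vertices within distance~$2$ of $M$. Vertices at distance $\ge 3$ from every vertex of $M$ are constrained merely to have degree $\ge\delta+1$, which may be far below $ad(G)$; with many such vertices the sum $\sum_v d_G(v)$ need not exceed $n\cdot ad(G)$, and no contradiction arises. (For instance when $\delta=1$, your count collapses to $n\ge 2|M|$, which already follows from the disjointness of the closed neighbourhoods $N[u]$, $u\in M$, and yields nothing.) The fix is to replace $M$ by the larger set $S=\{v:d_G(v)<(b_1(G)+1)/2\}$. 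The definition of $b_1$ forces the vertices of $S$ pairwise to distance $\ge 3$, so every vertex outside $S$ has at most one $S$-neighbour; a discharging argument in which each $v\in S$ pulls charge $\bigl((b_1+1)/2-d_G(v)\bigr)/d_G(v)$ from each neighbour then leaves every vertex with charge at least $(b_1+1)/2$ (the verification for a sender $w$ with unique $S$-neighbour $v$ uses $d_G(w)\ge b_1+1-d_G(v)$ and reduces to $b_1+1\ge 2d_G(v)$), whence $2\|G\|\ge |G|(b_1+1)/2$. The point is that now every vertex outside $S$ automatically has degree $\ge(b_1+1)/2$, so the ``far'' vertices that break your argument are harmless here.
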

By Theorem \ref{d2} and the above definitions we have $b_2(G) \leq b_3 (G)$ and 
\begin{equation} \label{eq:bprime}
b(G) \leq B(G) \leq B^{\prime}(G) \leq b_1(G)  \leq 2ad(G) -1.
\end{equation}
Note that, if a graph $G$ has no triangles then $B(G) = B^{\prime}(G) = b_1(G)$.
\begin{them}[Samodivkin ~\cite{samajc}]\label{SGZ}
Let $G$ be a connected graph embeddable on a surface $\mathbb{M}$ whose Euler
 characteristic $\chi$ is as large as possible and let $g(G) = g$.
 If $\chi \leq -1$ then:
 \begin{itemize}
 \item[(i)]  $ad(G) \leq \frac{2g}{g-2}(1 - \frac{\chi}{|G|})$;
 \item[(ii)]  $b(G) \leq 2ad(G) -1 \leq 3 + \frac{8}{g-2} - \frac{4\chi g}{|G|(g-2)}$.
 \end{itemize}
\end{them}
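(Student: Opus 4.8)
The plan is a direct application of Euler's formula, feeding its consequence into the Hartnell--Rall bound $b(G)\le 2ad(G)-1$ of Theorem \ref{d2}(i). Write $n=|G|$, $m=\|G\|$, $f=f(G)$ and $g=g(G)$. First I would observe that, since $\chi\le -1$, the graph $G$ is not a forest (a forest embeds in the sphere, of Euler characteristic $2$), so $G$ contains a cycle and $g<\infty$; moreover, on a surface of largest possible Euler characteristic the connected graph $G$ has a $2$-cell embedding, so \eqref{eq:euler} holds with equality: $n-m+f=\chi$.

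Next I would establish the facial girth inequality $gf\le 2m$. Summing the lengths $\ell(\phi)$ of the facial walks counts each edge exactly twice, $\sum_{\phi\in F(G)}\ell(\phi)=2m$, so it suffices to prove $\ell(\phi)\ge g$ for every face $\phi$. The edges traversed by a facial walk of length $\ell$ span a connected subgraph $H$ with $\|H\|\le\ell$; if $H$ contains a cycle, that cycle has length at most $\|H\|\le\ell$, forcing $\ell\ge g$. Otherwise $H$ is a tree, and the facial walk is then confined to $H$ and traverses each of its edges twice; inspecting the rotation at a vertex shows the walk can avoid leaving $H$ only if $d_G(v)=d_H(v)$ for every $v\in V(H)$, in which case $H$ is a union of components of $G$, so $G=H$ would be a tree, a contradiction. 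Hence $\ell(\phi)\ge g$ always, and $f\le 2m/g$.

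Substituting into the Euler equality gives $\chi=n-m+f\le n-m+2m/g=n-\frac{g-2}{g}m$, i.e. $m\le\frac{g}{g-2}(n-\chi)$, and multiplying by $2/n$ yields $ad(G)=2m/n\le\frac{2g}{g-2}\bigl(1-\frac{\chi}{n}\bigr)$, which is part (i). For part (ii) I would invoke $b(G)\le 2ad(G)-1$ from Theorem \ref{d2}(i) and substitute (i):
\[
b(G)\le 2ad(G)-1\le \frac{4g}{g-2}\Bigl(1-\frac{\chi}{|G|}\Bigr)-1=\frac{4g}{g-2}-1-\frac{4\chi g}{|G|(g-2)};
\]
since $\frac{4g}{g-2}-1=\frac{3(g-2)+8}{g-2}=3+\frac{8}{g-2}$, this is exactly the asserted bound $b(G)\le 3+\frac{8}{g-2}-\frac{4\chi g}{|G|(g-2)}$.

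The main obstacle I anticipate is the facial girth inequality $\ell(\phi)\ge g$ when $G$ is not $2$-edge-connected: bridges and pendant subtrees make some edges appear twice on one facial walk, and the content of the argument is precisely that such a walk of a cellular embedding of a graph that contains a cycle cannot be supported entirely on a subtree. The rest is routine bookkeeping with \eqref{eq:euler} and Theorem \ref{d2}.
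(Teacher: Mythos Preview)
Your argument is correct and follows the standard route: combine Euler's formula with the facial girth inequality $gf\le 2m$ to get $m\le\frac{g}{g-2}(n-\chi)$, divide by $n/2$ for (i), and feed into Theorem~\ref{d2}(i) for (ii). The paper does not prove Theorem~\ref{SGZ} here (it is quoted from \cite{samajc}), but its proof of the cognate Lemma~\ref{edge} proceeds identically, simply asserting ``clearly $gf(G)\le 2\|G\|$'' where you supply a careful justification that no facial walk of a $2$-cell embedding of a graph with a cycle can be supported on a tree.
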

The same upper bound for $b(G)$, in case when $g \in \{3,4\}$, is obtained by Gagarin and Zverovich \cite{GagarinZverovich1}. 
\begin{them}[Gagarin and Zverovich ~\cite{GagarinZverovich2}]\label{GZ11}
Let $G$ be a connected graph  $2$-cell embedded  in a surface $\mathbb{M}$ 
 with  $\chi(\mathbb{M}) = \chi \leq -1$. Then 
\[
 b(G) \leq 2ad (G) - 1 \leq 11 + \frac{3\chi(\sqrt{17-8\chi} - 3)}{\chi - 1}. 
 \]
\end{them}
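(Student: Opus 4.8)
The plan is to chain two estimates. The first is combinatorial and already available: by Theorem~\ref{d2}(i), $b(G)\le b_1(G)\le 2\,ad(G)-1$ (here $G$ is nontrivial, as it must contain a cycle — see below). The second is an upper bound on $ad(G)$ depending only on $\chi$; once this is in hand, the final inequality in the statement is purely algebraic. So the task reduces to proving
\[
ad(G)\ \le\ 6-\frac{12\chi}{3+\sqrt{17-8\chi}},
\]
and the main tool is Euler's formula used in two opposite directions. Write $n=|G|$, $m=\|G\|$, $f=f(G)$; since the embedding is $2$-cell, \eqref{eq:euler} holds with equality: $n-m+f=\chi$.

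First I would bound $ad(G)$ from above. Since a tree $2$-cell embeds only in $\mathbb{S}_0$ while $\chi\le-1$, the graph $G$ contains a cycle, so $n\ge 3$. For a simple connected graph carrying a cycle, every face of a $2$-cell embedding is bounded by a closed walk of length at least $3$ (a face of length $\le 2$ would force a multiple edge or a pendant edge whose facial walk closes already after two steps, which happens only for $K_2$); summing these lengths over all faces gives $3f\le 2m$. Substituting $f\le 2m/3$ into $n-m+f=\chi$ yields $m\le 3n-3\chi$, hence $ad(G)=2m/n\le 6-6\chi/n$. (This is Theorem~\ref{SGZ}(i) in the worst case $g(G)=3$; I prefer to re-derive it directly, to avoid invoking the hypothesis there that $\chi$ be as large as possible.)

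Next I would bound $n$ from below. From $f\ge 1$ and $n-m+f=\chi$ we get $m=n-\chi+f\ge n+1-\chi$; combined with $m\le\binom n2$ (simplicity) this gives $n+1-\chi\le\binom n2$, i.e. $n^2-3n+2\chi-2\ge 0$. The smaller root of this quadratic is negative (because $2\chi-2<0$), so from $n>0$ we must have $n\ge (3+\sqrt{17-8\chi})/2$. Finally, since $\chi<0$ the map $n\mapsto 6-6\chi/n$ is decreasing, hence maximised at the least admissible $n$; substituting $n=(3+\sqrt{17-8\chi})/2$ gives $ad(G)\le 6-12\chi/(3+\sqrt{17-8\chi})$, and therefore $b(G)\le 2\,ad(G)-1\le 11-24\chi/(3+\sqrt{17-8\chi})$. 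Rationalising the denominator — multiply numerator and denominator by $\sqrt{17-8\chi}-3$, so that the denominator becomes $(17-8\chi)-9=8(1-\chi)$ — rewrites this last quantity as $11+3\chi(\sqrt{17-8\chi}-3)/(\chi-1)$, which is the claimed bound.

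I do not anticipate a real obstacle; the proof is careful bookkeeping with Euler's formula, and the only step that deserves an explicit sentence is the face-length-at-least-$3$ claim, precisely because $G$ may have degree-$1$ vertices and bridges. One cosmetic point if one chooses to cite Theorem~\ref{SGZ} rather than re-derive the edge bound: that theorem is stated for the surface of \emph{largest} Euler characteristic admitting $G$; if this characteristic is nonnegative, then $ad(G)\le 6$ and $b(G)\le 11$, and the asserted bound holds trivially since $3\chi(\sqrt{17-8\chi}-3)/(\chi-1)\ge 0$ for $\chi\le-1$, while if it is at most $-1$ one obtains the same estimate with $\chi$ replaced by a possibly larger value, which only weakens the bound.
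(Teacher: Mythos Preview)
Your proof is correct. Note, however, that the paper does not give its own proof of Theorem~\ref{GZ11}: it is quoted from \cite{GagarinZverovich2} as a preliminary result. That said, the paper later reproduces precisely this bound as Corollary~\ref{aver}(i), obtained by combining Theorem~\ref{SGZ}(i) (which yields $ad(G)\le 6-6\chi/|G|$ in the worst case $g=3$) with the case $\gamma=1$ of Proposition~\ref{upper}(ii) (which yields $|G|\ge (3+\sqrt{17-8\chi})/2$ from $\|G\|\ge |G|+1-\chi$ and $\|G\|\le\binom{|G|}{2}$). Your argument is exactly this two-step scheme, carried out directly from Euler's formula and Lemma~\ref{edge} rather than by citation; the self-contained derivation also sidesteps the ``largest Euler characteristic'' hypothesis of Theorem~\ref{SGZ}, as you observe in your final paragraph. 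There is no genuine difference in method.
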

\begin{them}[Samodivkin ~\cite{samcmj}]\label{samczech}
Let $G$ be a connected  toroidal or Klein bottle graph. 
Then $b_2(G) \leq  \Delta(G)+3$ with equality if and only if one
of the following conditions is valid:
\begin{itemize}
\item[(P3)] $G$ is $4$-regular without triangles;
\item[(P4)] $G$ is $6$-regular and no edge of $G$ belongs to at least $3$ triangles.
\end{itemize} 
\end{them}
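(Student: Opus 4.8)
The plan is to invoke Theorem~\ref{d2}(ii), which reduces the statement to bounding $b_2(G)=\min\{d_G(x)+d_G(y)-1-|N_G(x)\cap N_G(y)| : xy\in E(G)\}$; throughout I write $t(xy)=|N_G(x)\cap N_G(y)|$ for the number of triangles through an edge $xy$. Because $G$ embeds on the torus or the Klein bottle, Euler's inequality~\eqref{eq:euler} together with the face bound $3f(G)\le 2\|G\|$ (for $G\neq K_2$, a trivial case) yields $ad(G)\le 6$, so $\delta(G)\le 6$; moreover $\delta(G)=6$ forces $G$ to be $6$-regular, and then equality throughout Euler's formula makes the embedding a $2$-cell triangulation, so $t(xy)\ge 2$ on every edge. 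The extreme regimes are now immediate: if $\delta(G)\le 3$, a minimum-degree vertex $v$ and any neighbour $u$ give $b_2(G)\le d_G(v)+d_G(u)-1\le\Delta(G)+2$; if $\delta(G)=4$, the same edge gives $b_2(G)\le\Delta(G)+3$; and if $\delta(G)=6$, then $b_2(G)=11-\max_{xy}t(xy)\le 9=\Delta(G)+3$.

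The substantive case is $\delta(G)=5$, which I would treat by contradiction, assuming $b_2(G)\ge\Delta(G)+3$. For a degree-$5$ vertex $v$ and any neighbour $u$ one gets $d_G(u)-t(vu)\ge\Delta(G)-1$, hence $t(vu)\le 1$; since two triangular faces meeting along an edge $vu$ at $v$ would force $t(vu)\ge 2$, at most two of the five face-corners at $v$ are triangular, so at least three lie on faces of length $\ge 4$. A discharging argument then finishes it: give each vertex charge $d_G(v)-6$ and each face $f$ charge $2\ell(f)-6$, so the total charge equals $-6\chi\le 0$, and let each face of length $\ge 4$ send $\tfrac12$ to every incident corner at a vertex of degree $\le 5$. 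Each degree-$5$ vertex then ends with charge at least $-1+\tfrac32>0$, each degree-$6$ vertex keeps charge $0$, each vertex of degree $\ge 7$ keeps charge at least $1$, and each face keeps nonnegative charge; since $\delta(G)=5$ some vertex has strictly positive final charge, contradicting the nonpositive total. This argument actually gives the sharper bound $b_2(G)\le\Delta(G)+2$ when $\delta(G)=5$, which the equality analysis needs.

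For the ``if and only if,'' the previous paragraph shows that $b_2(G)=\Delta(G)+3$ can hold only when $\delta(G)\in\{4,6\}$. If $\delta(G)=6$, then $G$ is a $6$-regular triangulation and $b_2(G)=11-\max_{xy}t(xy)$, which equals $9=\Delta(G)+3$ precisely when no edge lies on three or more triangles---this is (P4). If $\delta(G)=4$ and $b_2(G)=\Delta(G)+3$, then examining a degree-$4$ vertex $v$ forces every neighbour of $v$ to have degree $\Delta(G)$ and forces $N_G(v)$ to be independent; when $\Delta(G)=4$ this makes $G$ $4$-regular, and examining all edges then forces $G$ to be triangle-free, i.e. (P3). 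The crux is ruling out $\delta(G)=4$ with $\Delta(G)\ge 5$: rerunning the discharging, a degree-$4$ vertex with independent neighbourhood receives exactly $2$ and settles at charge $0$, a degree-$5$ vertex (if present) settles at positive charge, and a vertex of degree $\ge 7$ settles at positive charge, so the only configuration not already contradicting the charge total has $\Delta(G)=6$, no degree-$5$ vertices, and degrees in $\{4,6\}$; for that configuration (where $\chi=0$, the case $\chi>0$ being immediate) equality of the now-zero total charge forces every face of length $\ge 4$ through a degree-$4$ vertex to be a quadrilateral all of whose four corners lie at degree-$4$ vertices, which is impossible since the two neighbours of a degree-$4$ vertex on such a face have degree $\Delta(G)=6$. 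The converse implications are direct: (P3) gives $b_2(G)=4+4-1-0=7=\Delta(G)+3$, while a (P4) graph is a $6$-regular toroidal or Klein-bottle graph, hence a $2$-cell triangulation with $t(xy)=2$ on every edge, so $b_2(G)=6+6-1-2=9=\Delta(G)+3$. I expect the main obstacle to be calibrating the discharging so that the forced local structure near low-degree vertices leaves exactly the needed surplus, and in particular handling the recalcitrant $\{4,6\}$-bidegree subcase, which requires the separate face count rather than the charge total alone.
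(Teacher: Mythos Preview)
This theorem is not proved in the present paper: it appears in Section~2 among the preliminary results and is cited from the author's earlier paper~\cite{samcmj}, so there is no proof here against which to compare your attempt. Your discharging approach for the $\delta(G)=5$ case, together with the case split on $\delta(G)\in\{\le 3,4,5,6\}$ and the equality analysis, is a reasonable and self-contained route to the statement; whether it coincides with or differs from the original argument can only be judged by consulting~\cite{samcmj} directly.
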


In \cite{FH}, Frucht and Harary define the corona of two graphs $G_1$ and $G_2$ 
to be the graph $G=G_1\circ G_2$ formed from one
copy of $G_1$ and $|G_1|$ copies of $G_2$, where the $i$th vertex of $G_1$ 
is adjacent to every vertex in the $i$th copy of $G_2$.
\begin{them}[Carlson and Develin ~\cite{CarlsonDevelin}]\label{CaDe}
Let $G$ be a graph of the form $G = H \circ K_1$. Then $b(G) = \delta (H) + 1$.
\end{them}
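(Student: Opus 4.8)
The plan is to establish the two inequalities $b(G)\le\delta(H)+1$ and $b(G)\ge\delta(H)+1$ separately. Write $G=H\circ K_1$ and let $u'$ denote the pendant vertex attached to $u\in V(H)$. First I would record the standard fact $\gamma(G)=|H|$: the set $V(H)$ is dominating, while the closed neighbourhoods $N_G[u']=\{u,u'\}$, $u\in V(H)$, are pairwise disjoint, so every dominating set meets each of them. Since this disjointness argument gives $\gamma(G-F)\ge|H|$ for \emph{every} edge set $F$, the lower bound on $b(G)$ reduces to producing, for each $F$ with $|F|\le\delta(H)$, a dominating set of $G-F$ of size $|H|$.

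For the upper bound I would choose $v\in V(H)$ with $d_H(v)=\delta(H)$ and delete the $\delta(H)+1$ edges incident with $v$ in $G$, i.e.\ $vv'$ together with all $vw$ with $w\in N_H(v)$. In the resulting graph $v$ and $v'$ are both isolated, so both must lie in any dominating set; as each remaining pair $\{u,u'\}$, $u\neq v$, still forces a vertex into any dominating set, the domination number becomes at least $(|H|-1)+2>\gamma(G)$, whence $b(G)\le\delta(H)+1$.

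The substance of the proof is the lower bound. Fix $F$ with $k:=|F|\le\delta(H)$, set $S=\{u\in V(H):uu'\in F\}$, $s:=|S|\le k$, and note that at most $k-s$ edges of $F$ lie inside $H$. I would form the transversal set $D$ defined by $u'\in D$ for $u\in S$ and $u\in D$ for $u\notin S$; then $|D|=|H|$. For $u\notin S$ the edge $uu'$ survives, so $u\in D$ dominates both $u$ and $u'$, and each pendant $u'$ with $u\in S$ dominates itself. The only vertices that might fail to be dominated by $D$ are the vertices $u\in S$, and such a $u$ is dominated as soon as it has a neighbour $w\in N_H(u)$ with $w\notin S$ and $uw\notin F$. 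The decisive estimate is that the number of bad neighbours of $u$ --- those in $S$, or joined to $u$ by an edge of $F$ --- is at most $(s-1)+(k-s)=k-1<k\le\delta(H)\le d_H(u)$, so a good neighbour always exists. Hence $\gamma(G-F)=|H|=\gamma(G)$ and $b(G)\ge\delta(H)+1$; combined with the previous paragraph this yields $b(G)=\delta(H)+1$.

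I expect the only delicate point to be this last counting step: one must argue that, since every $u\in S$ already spends one edge of $F$ on $uu'$ and $\delta(H)\le d_H(u)$, the remaining budget of $F$ is too small to block all of $u$'s neighbours in $H$. Once this is clear, everything else is routine bookkeeping around the identity $\gamma(G)=|H|$.
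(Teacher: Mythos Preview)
The paper does not give its own proof of this statement: Theorem~\ref{CaDe} is quoted from Carlson and Develin~\cite{CarlsonDevelin} and used as a black box (to compute $b(K_6\circ K_1)$, $b(K_7\circ K_1)$, $b(K_8\circ K_1)$, and $b(H\circ K_1)$ for $6$-regular $H$). So there is nothing in the present paper to compare your argument against.

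That said, your argument is correct and is essentially the standard one. The identification $\gamma(G)=|H|$ via the disjoint closed neighbourhoods $\{u,u'\}$ is exactly right, and the upper bound---isolating a minimum-degree vertex of $H$ together with its pendant---is immediate. Your lower bound is where the content lies, and the counting is sound: for $u\in S$ the number of neighbours $w\in N_H(u)$ that are ``blocked'' (either $w\in S$ or $uw\in F$) is at most $(s-1)+(k-s)=k-1<\delta(H)\le d_H(u)$, since the $s$ pendant edges in $F$ and the remaining $k-s$ edges of $F$ inside $H$ account for all of $F$. This guarantees a surviving edge from $u$ to some $w\in D\setminus S$, so $D$ dominates $G-F$. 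Nothing is missing; the only cosmetic remark is that you might state explicitly that every edge of $G$ is either a pendant edge $uu'$ or an edge of $H$, which justifies the partition of $F$ used in the count.
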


\begin{lema}[Sachs \cite{Sachs}, pp. 226-227] \label{fivedegree}
Let $G$ be a connected graph embeddable in a surface $\mathbb{M}$. 
If $\mathbb{M} \in \{\mathbb{S}_0, \mathbb{N}_1\}$ then $\delta (G) \leq 5$. 
If $\chi(\mathbb{M}) \leq 1$ then  
$\delta (G) \leq \left\lfloor (5+\sqrt{49-24\chi(\mathbb{M})})/2\right\rfloor$. 
\end{lema}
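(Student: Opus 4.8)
The plan is to pull an edge bound out of Euler's formula and then play it against the trivial inequality $|G|\ge\delta(G)+1$.

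First I would dispose of the easy cases: if $\delta(G)\le 1$ both assertions are immediate (for $\chi\le 1$ the number $\lfloor(5+\sqrt{49-24\chi})/2\rfloor$ is at least $5$), so assume from now on that $\delta(G)\ge 2$; in particular $G\neq K_2$. Next I would observe that in any embedding of such a simple connected graph on $\mathbb{M}$ every face is incident with at least three edge-sides — a face with at most two incident edge-sides would have to arise from a loop, a multiple edge, or from $G\cong K_2$, none of which occurs — so, summing edge-sides over all faces, $3f(G)\le 2\|G\|$. Feeding this into Euler's inequality (\ref{eq:euler}) gives $\chi\le |G|-\|G\|+f(G)\le |G|-\|G\|/3$, i.e.
\[ \|G\|\le 3|G|-3\chi . \]
(Alternatively one may first pass to a $2$-cell embedding in a surface of Euler characteristic $\ge\chi$ and invoke equality in (\ref{eq:euler}); since the target bound is non-increasing in $\chi$, nothing is lost either way.) Writing $n=|G|$, $\delta=\delta(G)$ and using $\delta n\le\sum_{v}d_G(v)=2\|G\|$, this rearranges to
\[ n(\delta-6)\le -6\chi . \]

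Now I would combine this with $n\ge\delta+1$. If $\chi\ge 1$ (the cases $\mathbb{M}=\mathbb{N}_1$ and $\mathbb{M}=\mathbb{S}_0$), the right-hand side is negative, forcing $\delta-6<0$ and hence $\delta\le 5$; this matches both formulations, since $\lfloor(5+\sqrt{49-24})/2\rfloor=5$. If $\chi\le 0$ and $\delta\le 6$, then $\delta\le 6\le\lfloor(5+\sqrt{49-24\chi})/2\rfloor$ and we are done. Finally, if $\chi\le 0$ and $\delta\ge 7$, then $\delta-6\ge 1$, so $n\le -6\chi/(\delta-6)$; substituting $n\ge\delta+1$ yields $(\delta+1)(\delta-6)\le -6\chi$, that is $\delta^2-5\delta-6+6\chi\le 0$, and solving the quadratic gives $\delta\le(5+\sqrt{49-24\chi})/2$. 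Since $\delta$ is an integer, $\delta\le\lfloor(5+\sqrt{49-24\chi})/2\rfloor$, as required.

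The only step that needs genuine care is the face count $3f(G)\le 2\|G\|$ for a possibly non-$2$-cell embedding (equivalently, carrying out the standard reduction to a $2$-cell embedding in a surface of no smaller Euler characteristic); everything after that is elementary arithmetic, the one thing to keep an eye on being that the borderline values $\chi=1$ and $\chi=2$ remain consistent with the two ways in which the statement is phrased.
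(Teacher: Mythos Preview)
The paper does not supply its own proof of this lemma; it is quoted from Sachs with a page reference and used as a black box. There is therefore nothing in the paper to compare your argument against.

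That said, your proof is correct and is precisely the classical Heawood-type derivation one would find in Sachs or in any standard reference: combine the edge bound $\|G\|\le 3|G|-3\chi$ coming from Euler's formula and $3f(G)\le 2\|G\|$ with the handshake inequality $\delta\,|G|\le 2\|G\|$ and the trivial bound $|G|\ge\delta+1$, then solve the resulting quadratic. Your handling of the low-degree cases and of the passage to a $2$-cell embedding (so that the face-length estimate $3f\le 2\|G\|$ is justified) is exactly the standard bookkeeping, and your remark that the bound is non-increasing in $\chi$ is what makes that reduction harmless. Nothing is missing.
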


\begin{lemma} \label{edge}
Let $G$ be a graph embedded in a surface $\mathbb{M}$.
If $g(G) =g < \infty$ then $\|G\| \leq (|G| - \chi(\mathbb{M}))\frac{g}{g-2}$. 
\end{lemma}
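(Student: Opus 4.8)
The plan is to combine Euler's inequality (\ref{eq:euler}) with a girth-driven edge--face count. Fix the given embedding of $G$ in $\mathbb{M}$. Since $g(G)=g<\infty$, the graph $G$ contains a cycle, every cycle of $G$ has length at least $g$, and in particular $\|G\|\geq g\geq 3$.

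First I would bound $f(G)$ from above. Each edge of $G$ has two sides, and each side lies on the boundary of exactly one face, so $\sum_{\phi\in F(G)}\ell(\phi)=2\|G\|$, where $\ell(\phi)$ denotes the number of edge-sides lying on the boundary of the face $\phi$. The key claim is that $\ell(\phi)\geq g$ for every face $\phi$. To see it, let $H_\phi\subseteq G$ be the subgraph spanned by the edges having a side on $\phi$. If $H_\phi$ were a forest, then $\mathbb{M}\setminus H_\phi$ would be connected (the complement of a forest in a surface is connected), and, since the frontier of $\phi$ is contained in $H_\phi$, the region $\phi$ would be both open and closed in $\mathbb{M}\setminus H_\phi$, hence equal to it; but $G$ has an edge outside $H_\phi$ (a forest cannot contain the cycle that $G$ possesses, so $H_\phi\subsetneq G$), and interior points of such an edge lie in $\mathbb{M}\setminus H_\phi$ yet not in $\phi$ -- a contradiction. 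Thus $H_\phi$ contains a cycle $C$; since every edge of $C$ contributes at least one side to $\phi$, we get $\ell(\phi)\geq|E(C)|\geq g$. Summing over all faces yields $g\,f(G)\leq 2\|G\|$, i.e. $f(G)\leq 2\|G\|/g$.

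Finally I would substitute this into (\ref{eq:euler}): $\chi(\mathbb{M})\leq|G|-\|G\|+f(G)\leq|G|-\|G\|+\tfrac{2\|G\|}{g}$, and, since $g-2>0$, rearrange to
\[
\|G\|\,\frac{g-2}{g}\leq|G|-\chi(\mathbb{M}),\qquad\text{that is,}\qquad \|G\|\leq\bigl(|G|-\chi(\mathbb{M})\bigr)\frac{g}{g-2},
\]
which is the asserted bound.

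The hard part is the inequality $\ell(\phi)\geq g$: for an embedding that need not be $2$-cell, a face may fail to be a disc and its boundary may be disconnected, so the comfortable phrasing ``each face is bounded by a closed walk of length at least $g$'' must be underpinned by a topological argument such as the one above (in the connected $2$-cell case it is immediate, since then each facial walk is a single closed walk through at least $g$ edges). Once $f(G)\leq 2\|G\|/g$ is in hand, the remaining manipulation of (\ref{eq:euler}) is routine.
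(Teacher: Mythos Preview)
Your proof is correct and proceeds along a genuinely different route from the paper's. The paper argues by reduction: for a connected $G$ it passes to a $2$-cell embedding on some surface $\mathbb{M}_1$ with $\chi(\mathbb{M}_1)\ge\chi(\mathbb{M})$, where the face--girth inequality $g\,f\le 2\|G\|$ is immediate (each face is a disc bounded by a closed walk of length $\ge g$), and then applies Euler's formula on $\mathbb{M}_1$; for a disconnected $G$ it first adds bridge edges within the embedding to connect the components---so the girth is unchanged---and invokes the connected case. You instead stay with the given embedding throughout and establish $g\,f(G)\le 2\|G\|$ directly, by showing that the boundary subgraph $H_\phi$ of every face must contain a cycle; the topological input is that a forest cannot separate a closed surface. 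Your approach sidesteps both the passage to a $2$-cell embedding and the monotonicity $\chi(\mathbb{M})\le\chi(\mathbb{M}_1)$; the paper's approach sidesteps the surface-topology lemma and makes the face--girth count trivial. One small point worth noting: your claim that the frontier of $\phi$ is contained in $H_\phi$ tacitly assumes $G$ has no isolated vertices (an isolated vertex could lie on $\partial\phi$ without belonging to $H_\phi$). This is harmless---removing isolated vertices leaves $\|G\|$ unchanged and only decreases $|G|$, so the inequality for the pruned graph implies it for $G$---but it is worth a sentence.
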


\begin{proof}[Proof of Lemma \ref{edge}] 
 {\em Case} 1: A graph $G$ is connected. 
 Then there is a surface  $\mathbb{M}_1$ on which $G$ can be $2$-cell embedded. 
 Since clearly $gf(G) \leq 2\|G\|$, by \eqref{eq:euler} we have 
 $\chi(\mathbb{M}) \leq \chi(\mathbb{M}_1) = |G| - \|G\| +f(G) \leq |G| - \|G\| + \frac{2}{g}\|G\|$,  
 and the result easily follows.
  
 {\em Case} 2: A graph $G$ is disconnected. Then there is a connected supergraph $G_1$ for $G$
  such that (a) $V(G_1) = V(G)$ and  $E(G) \subsetneq E(G_1)$, and 
  (b) $G_1$ can be embedded in $\mathbb{M}$.  
 By Case 1 we immediately have 
 $\|G\| < \|G_1\| \leq (|G_1| - \chi(\mathbb{M}))\frac{g}{g-2}$.  
\end{proof}
The next lemma is  fairly  obvious and hence we omit the proof. 
\begin{lemma} [J. van den Heuvel \cite{jvdh}] \label{2connectednew}
Let $G$ be a connected graph $2$-cell embedded in a surface 
$\mathbb{M} \in \{\mathbb{S}_h, \mathbb{N}_q\}$, $v \in V(G)$ and $d_G(v) \geq 2$. 
Let $E_v = \{ xy \mid x,y \in N_G(v), x \not = y, xy \not \in E(G)\}$. 
Then there is a subset $D \subseteq E_v$, such that the graph $H = G+D$ is still 
$2$-cell embedded in $\mathbb{M}$ and 
\begin{itemize} 
\item[(i)] $\left\langle N_H(v), H\right\rangle$ is connected;
\item[(ii)] $\left\langle N_H(v), H\right\rangle$ is 
 hamiltonian when $d_G(v) \geq 3$.
\end{itemize}
\end{lemma}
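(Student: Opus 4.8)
The plan is to work locally around $v$ using the rotation system of the given $2$-cell embedding, essentially recovering the argument of van den Heuvel. Put $k = d_G(v)$ and let $u_1, u_2, \dots , u_k$ be the neighbours of $v$, listed in the cyclic order in which the edges $vu_1, \dots , vu_k$ leave $v$ in the embedding; throughout, indices are taken modulo $k$, so $u_{k+1} = u_1$. Since $G$ has no multiple edges, these $k$ vertices are distinct. For each $i$ the corner of the embedding at $v$ between the consecutive edges $vu_i$ and $vu_{i+1}$ lies in a single face $f_i$, and at the relevant occurrence of $v$ the facial walk of $f_i$ reads $u_i, v, u_{i+1}$; as the embedding is $2$-cell, $f_i$ is an open disc and $u_i, u_{i+1}$ lie on its boundary.

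First I would add edges one corner at a time. Set $G_0 = G$ and, for $i = 1, \dots , k$, define $G_i$ as follows: if $u_iu_{i+1}$ is already an edge of $G_{i-1}$, let $G_i = G_{i-1}$; otherwise draw a new edge $u_iu_{i+1}$ inside the face of $G_{i-1}$ that contains the corner at $v$ between $vu_i$ and $vu_{i+1}$, routing it close to $v$ so that it cuts off a triangular region with boundary $vu_i, u_iu_{i+1}, u_{i+1}v$, and let $G_i = G_{i-1} + u_iu_{i+1}$. Adding a chord between two boundary vertices of a disc face splits that face into two disc faces, so each $G_i$ is again $2$-cell embedded in $\mathbb{M}$; moreover the new edge is never incident with $v$, so $d_{G_i}(v) = k$ and the rotation at $v$ is unchanged, and the chord drawn at step $i$ lies in the angular wedge at $v$ strictly between $vu_i$ and $vu_{i+1}$, hence does not disturb the corner between $vu_{i+1}$ and $vu_{i+2}$ that is used at the next step. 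Put $H = G_k$ and $D = E(H) \setminus E(G)$; then $D \subseteq E_v$ and $H$ is $2$-cell embedded in $\mathbb{M}$.

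It remains to read off (i) and (ii). Since no edge incident with $v$ was added, $N_H(v) = N_G(v) = \{ u_1, \dots , u_k \}$, and for every $i$ we have $u_iu_{i+1} \in E(H)$ (either it already lay in $G_{i-1} \subseteq H$, or it was added at step $i$). If $d_G(v) = 2$, then $u_1u_2 \in E(H)$, so $\left\langle N_H(v), H \right\rangle \cong K_2$ is connected, which is (i). If $d_G(v) = k \geq 3$, then $u_1u_2\cdots u_ku_1$ is a cycle meeting every vertex of $N_H(v)$, i.e.\ a Hamilton cycle of $\left\langle N_H(v), H \right\rangle$; this is (ii), and (i) follows since every hamiltonian graph is connected.

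The step that needs genuine care — and the only reason the claim is not completely immediate — is the bookkeeping in the second paragraph: one must check that the chord inserted for a corner neither creates a multiple edge (which is why the step is skipped whenever $u_iu_{i+1}$ is already present) nor spoils a corner that is still to be processed (which is ensured by drawing each chord in the small region near $v$, cutting off a triangle that isolates it from the rest of the face). No distinction between the orientable and non-orientable cases is needed, since the argument uses only that every face of a $2$-cell embedding is a disc.
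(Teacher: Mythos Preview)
Your argument is correct. The paper itself omits the proof entirely, calling the lemma ``fairly obvious''; your rotation-system construction---processing the corners at $v$ one by one and drawing each missing chord $u_iu_{i+1}$ close to $v$ so as to cut off a triangle---is precisely the natural way to fill this gap, and your bookkeeping about avoiding multiple edges and not disturbing the remaining corners addresses the only places where anything could go wrong.
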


\section{Upper bounds: degree $s$ vertices}\label{sver}
\subsection{Results}
The main result of this section is the following theorem. 

\begin{theorem} \label{bBprim}
Let $G$ be a connected graph  $2$-cell embedded in $\mathbb{M}\in \{\mathbb{S}_p, \mathbb{N}_q\}$. 
 If $V_s(G) \not = \emptyset$ for some  $s \geq 4$ and 
\[
 -14\chi(\mathbb{M})  <  (s-4)\beta_0 (\left\langle V_s, G\right\rangle) + 2(s-5)|G| + 4|V_{\leq 2}| + 2\sum_{j=3}^{s-1}(5-j)|V_j|
\]
then $b(G)\leq B^{\prime}(G) \leq 2s-2$.
\end{theorem}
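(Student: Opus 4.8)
The plan is to bound $B'(G)$ by exhibiting a particular edge $xy\in E(G)$ for which the relevant quantity in the definition of $b_3$ (and hence of $B'$) is at most $2s-2$, and to find such an edge by a discharging/counting argument built on Euler's formula. First I would take a vertex $v\in V_s(G)$ and apply Lemma~\ref{2connectednew}: since $s=d_G(v)\ge 4\ge 3$, there is a supergraph $H=G+D$, still $2$-cell embedded in $\mathbb M$, in which $\langle N_H(v),H\rangle$ is hamiltonian. Note $D$ consists only of edges between neighbours of $v$, so adding $D$ does not change $d_G(v)=s$, does not decrease any degree, and does not create new vertices; crucially it also does not change the relevant "defect" quantities on the right-hand side too much — one has to check that $\beta_0(\langle V_s,G\rangle)$, the sets $V_{\le 2}$, and the sums $\sum(5-j)|V_j|$ are controlled when passing from $G$ to $H$ (degrees only go up, so $V_{\le 2}(H)\subseteq V_{\le 2}(G)$, etc.). The point of passing to $H$ is that a hamiltonian cycle on $N_H(v)$ gives, for any two consecutive neighbours $x,y$ of $v$ on that cycle, an edge $xy\in E(H)$ with $v\in N_H(x)\cap N_H(y)$, so that $d_H(x)+d_H(y)-1-|N_H(x)\cap N_H(y)|\le d_H(x)+d_H(y)-3$; thus for such an edge the $\max$ in $b_3$ reduces to $d_H(x)+d_H(y)-3$, and we want to locate two consecutive neighbours of low degree.

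Next I would argue that some edge $xy$ with $x,y\in N_H(v)$ consecutive on the hamiltonian cycle satisfies $d_H(x)+d_H(y)\le 2s+1$, which gives $b_3(H)\le d_H(x)+d_H(y)-3\le 2s-2$; since $b(G)\le b(H)$? — actually one must be careful, $b$ is not monotone under adding edges, so instead I would argue directly that $B'(G)\le B'(H)$-type inequalities fail and rather bound $B'(G)$ itself. The cleaner route: the edge $xy\in E(H)$ we find may not lie in $E(G)$, but then $x,y\in N_G(v)$ are at distance $\le 2$ in $G$, so $b_1(G)\le d_G(x)+d_G(y)-1\le 2s+1-1=2s$; hmm, that only gives $2s$, not $2s-2$. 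So the real argument must produce an edge already in $E(G)$, or squeeze two units more. I would handle this by working with the hamiltonian cycle $C$ on $N_G(v)$ in $G$ itself when possible, and more importantly by a global averaging: if \emph{every} pair of $H$-consecutive neighbours of $v$ had degree sum $\ge 2s+2$, then summing around $C$ forces the neighbours of $v$ to have large total degree, and iterating this over all $v\in V_s(G)$ (or over a suitable independent subset, which is where $\beta_0(\langle V_s,G\rangle)$ enters) forces $\|G\|$ — equivalently $ad(G)$ — to be large, contradicting the edge bound coming from Euler's formula. This is exactly the role of the hypothesis: the inequality $-14\chi<(s-4)\beta_0+2(s-5)|G|+4|V_{\le2}|+2\sum_{j=3}^{s-1}(5-j)|V_j|$ is, after rearrangement, precisely the statement that the Euler-derived upper bound on $2\|G\|$ is incompatible with all those degree sums being $\ge 2s+2$.

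Concretely, the counting step is: assign to each vertex $u$ its degree $d_G(u)$; $\sum_u d_G(u)=2\|G\|\le 2(|G|-\chi)\cdot\frac{g}{g-2}$ if $G$ has a short cycle (Lemma~\ref{edge}), and in general $\le 2(|G|-\chi)\cdot\frac{3}{1}$? no — the sharper and cleaner bound here is simply $\|G\|\le 3(|G|-\chi)$ always (triangulation bound from \eqref{eq:euler} with $g\ge 3$), i.e. $2\|G\|\le 6|G|-6\chi$. Then I would split the vertex set into $V_{\le 2}$, $V_3,\dots,V_{s-1}$, $V_s$, and $V_{\ge s+1}$, write $2\|G\|=\sum_{j\le 2}j|V_j|+\sum_{3\le j\le s-1}j|V_j|+s|V_s|+\sum_{j\ge s+1}j|V_j|$, and use the assumed failure of the conclusion — i.e. $b_3(G)\ge 2s-1$, hence every edge among $N_H(v)$ consecutive pairs has degree sum $\ge 2s+1$, hence (sharpening, using that $x,y$ are themselves adjacent so contribute to each other's count, or using the $b_2$ form which subtracts $|N\cap N|\ge 1$) degree sum $\ge 2s+2$ — to get a lower bound on $\sum_{u\in N_G[V_s]}d_G(u)$ in terms of $|V_s|$ and the sizes $|V_j|$. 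Matching the two and simplifying should reproduce the displayed inequality with $-14\chi$ on the left; the coefficients $s-4$, $2(s-5)$, $4$, $2(5-j)$ are the bookkeeping from how a degree-$j$ vertex with $j<5$ "overspends" relative to the threshold $5$, and $\beta_0(\langle V_s,G\rangle)$ appears because two \emph{adjacent} degree-$s$ vertices cannot both be used as the "centre" $v$ without double-counting, so we select an independent set in $\langle V_s,G\rangle$ of maximum size. The main obstacle I anticipate is exactly this double-counting/bookkeeping: making the discharging tight enough to land on $-14\chi$ rather than a weaker constant, and correctly handling the edges $D$ added by Lemma~\ref{2connectednew} so that the count is carried out in $G$ while the hamiltonicity is used in $H$. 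Everything else — the reduction of $b$ to $B'$, of $B'$ to a single well-chosen edge, and the passage through Euler's inequality — is routine given Theorems~\ref{d2} and the lemmas above.
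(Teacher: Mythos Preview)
Your overall contradiction strategy is right --- assume $B'(G)\ge 2s-1$, use Lemma~\ref{2connectednew} around the degree-$s$ (and lower) vertices to force their neighbours to have high degree in the augmented graph, and then contradict the Euler edge bound. But two concrete ideas that the paper uses are missing from your plan, and without them the bookkeeping will not close at $-14\chi$.

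First, the paper does \emph{not} count in $G$ (nor in $G_k=G+D$). After building $G_k$ via Lemma~\ref{pomost}, it \emph{deletes} $V_{\le s-1}(G)$ and applies Lemma~\ref{edge} to $H=G_k-V_{\le s-1}$. In $H$ one has $\delta(H)=s$, the surviving degree-$s$ vertices are exactly $I_s$ (an independent set with $|I_s|=\beta_0(\langle V_s,G\rangle)$), every neighbour of a vertex of $I_s$ has degree $\ge s+2$ in $H$, and every other vertex has degree $\ge s+1$. Writing $6(|H|-\chi)\ge 2\|H\|$ and splitting the degree sum over $I_s$, $N_H(I_s)$, and the rest gives inequality~\eqref{qwaa}:
\[
-6\chi\ \ge\ -|I_s|+|N_H(I_s)|+(s-5)|H|+|V_{\le 2}|(s-3)+\sum_{j=3}^{s-1}(s-j)|V_j|.
\]
Your plan to discharge in $G$ itself, with the low-degree vertices still present, loses exactly the control that makes these coefficients come out.

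Second --- and this is the step that manufactures the $-14$ --- the paper applies the Euler edge bound a \emph{second} time, to the bipartite subgraph $R$ of $H$ on parts $I_s$ and $N_H(I_s)$ (which is embedded in $\mathbb{M}$ since $R\subseteq H$). From $s|I_s|=\|R\|\le 2(|R|-\chi)$ one gets $|N_H(I_s)|\ge\frac{s-2}{2}|I_s|+\chi$. Substituting this into \eqref{qwaa} yields $-7\chi\ge\frac{s-4}{2}|I_s|+(s-5)|H|+\cdots$, hence $-14\chi\ge(s-4)|I_s|+2(s-5)|H|+\cdots$, and replacing $|H|=|G|-|V_{\le s-1}|$ gives the displayed inequality. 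Your single Euler application leaves the unwanted term $+|N_H(I_s)|-|I_s|$ on the right, with the wrong sign on $|I_s|$; the bipartite trick is precisely what converts it to $\frac{s-4}{2}|I_s|+\chi$, buying the extra $-\chi$ that, after doubling, turns $-12\chi$ into $-14\chi$.

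One smaller point: your claim that consecutive neighbours on the hamiltonian cycle have degree sum $\ge 2s+2$ does not follow directly from $b_3(G)\ge 2s-1$, because $b_3$ is a $\max$; you only get $d(x)+d(y)\ge 2s+\min\{2,|N(x)\cap N(y)|\}$. The paper sidesteps this entirely: the assumption $B'(G)\ge 2s-1$ is used only to prove Claim~1 (via $b_1$), and the ``$+2$'' gain on neighbours of $I_s$ comes from the \emph{added} edges of $G_k$ (Lemma~\ref{pomost}(d)), not from $b_3$.
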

The next two corollaries immediately follow from Theorem \ref{bBprim}. 
\begin{corollary} \label{pet}
Let $G$ be a connected graph  $2$-cell embedded in $\mathbb{M}\in \{\mathbb{S}_p, \mathbb{N}_q\}$.  
 \begin{itemize}
 \item[(i)] If $V_5(G) \not = \emptyset$ and 
 $-14\chi(\mathbb{M})  < 4|V_{\leq 3}| + 2|V_{4}| + \beta_0 (\left\langle V_5, G\right\rangle)$ 
 then $b(G)\leq B^{\prime}(G) \leq 8$. 
 \item[(ii)] If $V_6(G) \not = \emptyset$ and 
 $-7\chi(\mathbb{M})  < 2|V_{\leq 3}| + |V_{4}| + \beta_0 (\left\langle V_6, G\right\rangle) + |G|$ 
 then $b(G)\leq B^{\prime}(G) \leq 10$.
  \item[(iii)] If $V_6(G) = \emptyset$, $V_7(G) \not = \emptyset$ and 
 $-14\chi(\mathbb{M})  < 4|V_{\leq 3}| + 2|V_{4}| + 3\beta_0 (\left\langle V_7, G\right\rangle) + 4|G|$ 
 then $b(G)\leq B^{\prime}(G) \leq 12$.
 \end{itemize}
 \end{corollary}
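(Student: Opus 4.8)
The plan is to prove the contrapositive. Since $b(G)\le B'(G)$ holds unconditionally by \eqref{eq:bprime}, it suffices to show that $B'(G)\le 2s-2$, so I would assume $V_s(G)\ne\emptyset$ and $B'(G)\ge 2s-1$ and deduce that $-14\chi(\mathbb{M})$ is at least the right-hand side of the displayed inequality. The first move is to unpack $B'(G)=\min\{b_1(G),b_3(G)\}\ge 2s-1$. From $b_1(G)\ge 2s-1$ one gets that any two vertices at distance at most $2$ have degree sum at least $2s$; hence every edge of $G$ has degree sum at least $2s$, and, pairing an arbitrary vertex with a vertex of $V_s(G)$, every vertex within distance $2$ of $V_s(G)$ has degree at least $s$ — equivalently, every vertex counted by $V_{\le 2}$ or by $V_j$ with $3\le j\le s-1$ lies at distance at least $3$ from $V_s(G)$. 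From $b_3(G)\ge 2s-1$ one gets that for every edge $xy$ of $G$ either $d_G(x)+d_G(y)\ge 2s+2$ or $|N_G(x)\cap N_G(y)|\le d_G(x)+d_G(y)-2s$; in particular an edge that lies in a triangle and has an endpoint of degree $s$ must have its other endpoint of degree at least $s+1$, and an edge joining two vertices of degree $s$ lies in no triangle.

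Next I would pass to an auxiliary graph on the vertex set $V(G)$. Let $I$ be a maximum independent set of $\langle V_s,G\rangle$, so $|I|=\beta_0(\langle V_s,G\rangle)$, and apply Lemma \ref{2connectednew}(ii) in turn at each vertex of $I$ (legitimate for $s\ge 4\ge 3$, and consistent because the vertices of $I$ are pairwise non-adjacent, so each application inserts edges only inside the neighbourhood of one vertex of $I$ and never alters the degree of another such vertex). This produces $H\supseteq G$ with $V(H)=V(G)$, still $2$-cell embedded in $\mathbb{M}$, such that $d_H(v)=s$ and $\langle N_G(v),H\rangle$ is hamiltonian for every $v\in I$, while $d_H(x)\ge d_G(x)$ for all $x$. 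For each $v\in I$ fix a hamiltonian cycle $C_v=u_1u_2\cdots u_s$ of $\langle N_G(v),H\rangle$.

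The last and main step is an Euler-formula count for the $2$-cell embedding of $H$, run so as to exploit the cycles $C_v$. Consecutive $u_i,u_{i+1}$ lie at distance at most $2$ in $G$, so $b_1(G)\le d_G(u_i)+d_G(u_{i+1})-1$; averaging this around $C_v$ constrains $\sum_{u\in N_G(v)}d_G(u)$. Moreover whenever $u_iu_{i+1}\in E(G)$ the triangle $vu_iu_{i+1}$ activates the $b_3$-restriction above and forces $d_G(u_i)+d_G(u_{i+1})\ge 2s+1$, whereas whenever $u_iu_{i+1}\notin E(G)$ that edge was inserted and contributes to $\|H\|-\|G\|$. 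Combining the Euler inequality $\|H\|\le 3|H|-3\chi(\mathbb{M})$ (every face of $H$ has length at least $3$) and $2\|H\|=\sum_x d_H(x)\ge\sum_x d_G(x)$ with the local inequalities attached to the vertices of $I$, summing those additively — the independence of $I$ is exactly what makes the bookkeeping additive — and regrouping $\sum_x d_G(x)$ by degree class (the vertices of degree below $s$ not interfering, being far from $V_s(G)$) is designed to output precisely the four terms $(s-4)\beta_0(\langle V_s,G\rangle)$, $2(s-5)|G|$, $4|V_{\le 2}|$, and $2\sum_{j=3}^{s-1}(5-j)|V_j|$, with the passage from a multiple $6$ of $-\chi(\mathbb{M})$ in a bare Euler bound to the multiple $14$ here reflecting that a vertex-degree estimate and a face-length estimate are both being used.

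The step I expect to be the genuine work is this last count: making the per-wheel accounting precise enough to land on the exact coefficients $(s-4)$, $2(s-5)$, $4$ and $2(5-j)$, which requires care with faces incident to two vertices of $I$, rim edges of $C_v$ already present in $G$ versus those newly added, and with the boundary range $s=4$ — there $2(s-5)<0$ and $\beta_0(\langle V_s,G\rangle)$ enters with coefficient $0$, so the inequality must be squeezed out of the $V_{\le 2}$ and $V_3$ contributions together with the triangle restriction. The first two steps, by contrast, are routine given Lemma \ref{2connectednew}, \eqref{eq:bprime}, and the definitions of $b_1$, $b_3$, $B'$.
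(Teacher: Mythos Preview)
The paper deduces Corollary~\ref{pet} in one line: it is the specialisation of Theorem~\ref{bBprim} to $s=5,6,7$ (with $V_6=\emptyset$ in part~(iii) killing the $-2|V_6|$ term). Your proposal instead sketches a direct proof, which is essentially a proof of Theorem~\ref{bBprim} itself. That is a legitimate route, but your sketch has two concrete gaps that prevent it from reaching the stated inequality.

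First, the low-degree vertices. You take $I$ to be a maximum independent set of $\langle V_s,G\rangle$ only, apply Lemma~\ref{2connectednew} just at those vertices, and remark that vertices of $V_{\leq s-1}$ ``do not interfere, being far from $V_s(G)$''. But in a bare Euler count on $H$ they interfere with the wrong sign: a vertex of degree $j\le s-1$ contributes $j$ to $\sum d_H$, so after $6|G|-6\chi\ge\sum d_H$ it enters with coefficient $j-6<0$, whereas the target inequality requires the \emph{positive} coefficients $+4$ (for $j\le 2$) and $+2(5-j)$ (for $3\le j\le s-1$). The paper fixes this by enlarging $I$ to an independent dominating set of $\langle V_{\leq s},G\rangle$ (which automatically contains all of $V_{\leq s-1}$, since $b_1(G)\ge 2s-1$ forces such vertices to be pairwise at distance $\ge 3$), applying Lemma~\ref{2connectednew} at those vertices too, and then \emph{deleting} $V_{\leq s-1}$. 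The neighbours of a deleted degree-$l$ vertex then have degree at least $2s-l+1$ (or $2s-2$ when $l\le 2$) in the resulting graph, and it is these boosted degrees that produce the positive coefficients you need.

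Second, the passage from $-6\chi$ to $-14\chi$. Your explanation (``a vertex-degree estimate and a face-length estimate are both being used'') does not account for it: the single bound $\|H\|\le 3|H|-3\chi$ already uses both and gives only $-6\chi$. What the paper actually does is apply Lemma~\ref{edge} a \emph{second} time, to the bipartite subgraph $R$ of $H$ spanned by the edges between $I_s$ and $N_H(I_s)$; since $g(R)\ge 4$ this yields $|N_H(I_s)|\ge\frac{s-2}{2}|I_s|+\chi$ (inequality~\eqref{nis1}). Substituting this lower bound for $|N_H(I_s)|$ into the first Euler inequality turns $-6\chi$ into $-7\chi$, and doubling gives $-14\chi$ together with the coefficient $(s-4)$ in front of $|I_s|=\beta_0(\langle V_s,G\rangle)$. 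Your per-wheel accounting around the hamiltonian cycles $C_v$ does not substitute for this step: at best it recovers $d_H(v)\ge s+2$ for $v\in N_G(I)$, which is exactly Lemma~\ref{pomost}(d) and is already used in the first Euler count.
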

  This solves Conjecture \ref{con1} when (a) $G$ is as in Corollary \ref{pet}(i) and $\Delta (G) \geq 6$ 
   or (b) $G$ is as in Corollary \ref{pet}(ii) and $\Delta (G) \geq 7$ . 
\begin{corollary}\label{delta}
Let $G$ be a connected graph  $2$-cell embedded in $\mathbb{M}\in \{\mathbb{S}_p, \mathbb{N}_q\}$,  
 $\delta (G) = \delta \geq 4$ and 
$-14\chi(\mathbb{M})  < (\delta -4)\beta_0 (\left\langle V_\delta, G\right\rangle) + 2(\delta -5)|G|$. 
 Then $b(G)\leq B^{\prime}(G) \leq 2\delta -2$.
\end{corollary}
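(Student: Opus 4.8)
The plan is to obtain Corollary~\ref{delta} as a direct specialization of Theorem~\ref{bBprim} with the choice $s=\delta$; the real content lives in that theorem, so what remains is only a short bookkeeping argument. First I would observe that since $\delta(G)=\delta$ is the minimum degree, the graph has at least one vertex of degree exactly $\delta$, so $V_\delta(G)=V_s(G)\neq\emptyset$ and the nonemptiness hypothesis of Theorem~\ref{bBprim} is satisfied. Moreover, every vertex of $G$ has degree at least $\delta\geq 4$, hence $V_{\leq 2}(G)=\emptyset$ (because $\delta\geq 3$) and $V_j(G)=\emptyset$ for every $j$ with $3\leq j\leq \delta-1=s-1$ (because each such $j$ is strictly below the minimum degree).

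Next I would feed these vanishings into the inequality of Theorem~\ref{bBprim}. Its right-hand side,
\[
(s-4)\beta_0(\langle V_s,G\rangle) + 2(s-5)|G| + 4|V_{\leq 2}| + 2\sum_{j=3}^{s-1}(5-j)|V_j|,
\]
then reduces, term by term, to $(\delta-4)\beta_0(\langle V_\delta,G\rangle) + 2(\delta-5)|G|$, since $|V_{\leq 2}|=0$ and $|V_j|=0$ for all $j$ in the summation range. Consequently the standing assumption of the corollary, namely $-14\chi(\mathbb{M}) < (\delta-4)\beta_0(\langle V_\delta,G\rangle) + 2(\delta-5)|G|$, is exactly the hypothesis required by Theorem~\ref{bBprim} in the case $s=\delta$. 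Invoking that theorem then gives $b(G)\leq B^{\prime}(G)\leq 2s-2 = 2\delta-2$, as claimed.

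Since this is essentially a substitution, there is no genuine obstacle; the only point worth care is verifying that the empty vertex classes really do annihilate the two extra summands, i.e. tracking that $\delta\geq 4$ forces both $V_{\leq 2}$ and all of $V_3,\dots,V_{\delta-1}$ to be empty. It is also worth noting the degenerate cases as sanity checks: for $\delta=5$ the term $2(\delta-5)|G|$ disappears and the hypothesis becomes $-14\chi(\mathbb{M}) < \beta_0(\langle V_5,G\rangle)$, matching Corollary~\ref{pet}(i); for $\delta=4$ the term $(\delta-4)\beta_0(\langle V_\delta,G\rangle)$ disappears and the condition becomes $-14\chi(\mathbb{M}) < -2|G|$, which (for $\chi(\mathbb{M})\geq -1$, or more generally) is typically vacuous, so the corollary carries real information precisely when $\delta\geq 5$. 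No separate argument is needed for any of these cases.
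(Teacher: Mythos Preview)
Your proposal is correct and follows exactly the approach the paper intends: the paper states that this corollary ``immediately follows from Theorem~\ref{bBprim}'', and your argument supplies precisely the routine verification---setting $s=\delta$, noting $V_\delta\neq\emptyset$ by definition of minimum degree, and observing that $\delta\geq 4$ forces $|V_{\leq 2}|=0$ and $|V_j|=0$ for $3\leq j\leq \delta-1$, so the extra summands in the hypothesis of Theorem~\ref{bBprim} vanish. The sanity-check remarks on $\delta=4,5$ are accurate but optional.
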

Hence we may conclude that Conjecture \ref{con1} is true whenever $G$ is as in Corollary \ref{delta} and 
  $4\delta(G) -4 \leq   3\Delta(G)$.  
  \begin{remark}\label{rem1}
 Let $G$ be a connected graph  $2$-cell embedded in a surface  
 $\mathbb{M}$ with $\chi(\mathbb{M}) = \chi \leq -1$ and  let  
 $\delta (G) = \delta \geq 6$. It is not hard to see that  
 if $-\frac{7\chi}{\delta - 5} - \frac{(\delta - 4)\beta_0 (\left\langle V_{\delta}, G\right\rangle)}{2(\delta - 5)} 
  < |G| \leq -12\chi$ then 
 the bound stated in Corollary \ref{delta}  is better than that given in Theorem \ref{SGZ}(ii).  
  \end{remark}
\begin{theorem}\label{deltamax}
Let $G$ be a connected graph embeddable on a surface $\mathbb{M}$ whose Euler
characteristic $\chi (\mathbb{M})$ is as large as possible. 
Let $G$ have no vertices of degree 
$\delta_{max}^{\mathbb{M}} = \max\{\delta (H) \mid \mbox{a graph\ } H \mbox{is\ } 
2\mbox{-cell\ }$ $\mbox{embedded in\ }  \mathbb{M}\}$. 
Then (a)  
$b(G) \leq B^{\prime}(G) \leq 2\delta_{max}^{\mathbb{M}} - 3$, and 
(b) if $\chi (\mathbb{M}) \leq 1$ then 
$b(G) \leq B^{\prime}(G) \leq 2\left\lfloor (5+\sqrt{49-24\chi(\mathbb{M})})/2\right\rfloor - 3$. 
\end{theorem}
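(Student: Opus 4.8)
The plan is the following. Since $\chi(\mathbb M)$ is as large as possible, $G$ is $2$-cell embedded in $\mathbb M$ (a minimum-genus embedding of a connected graph is cellular, exactly as in the setting of Theorem \ref{SGZ}); hence $\delta(G)\le\delta^*$, where $\delta^*:=\delta_{max}^{\mathbb M}$, and since $V_{\delta^*}(G)=\emptyset$ we get $\delta:=\delta(G)\le\delta^*-1$. Part (b) then drops out of part (a) and Lemma \ref{fivedegree}, which gives $\delta^*\le\lfloor(5+\sqrt{49-24\chi(\mathbb M)})/2\rfloor$ whenever $\chi(\mathbb M)\le1$. So everything reduces to proving (a): $B^{\prime}(G)\le 2\delta^*-3$, after which $b(G)\le B^{\prime}(G)$ by \eqref{eq:bprime}. (If $|G|\le 1$ the statement is vacuous; assume $G$ nontrivial.)

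To prove (a) I would use $B^{\prime}(G)=\min\{b_1(G),b_3(G)\}$ together with the observation that in a $2$-cell embedding each edge lies on two faces, so a triangular face on an edge $xy$ puts its third vertex into $N_G(x)\cap N_G(y)$. Hence it suffices to produce one ``light'' configuration: (i) a pair $x,y$ with $1\le d_G(x,y)\le2$ and $d_G(x)+d_G(y)\le 2\delta^*-2$; (ii) an edge $xy$ on a triangular face with $d_G(x)+d_G(y)\le 2\delta^*-1$; or (iii) an edge $xy$ on two triangular faces with $d_G(x)+d_G(y)\le 2\delta^*$. In case (i), $b_1(G)\le d_G(x)+d_G(y)-1\le 2\delta^*-3$; in cases (ii) and (iii), since $|N_G(x)\cap N_G(y)|\ge1$ resp.\ $\ge2$, the quantity $\max\{d_G(x)+d_G(y)-1-|N_G(x)\cap N_G(y)|,\ d_G(x)+d_G(y)-3\}$ is at most $2\delta^*-3$, so $b_3(G)\le 2\delta^*-3$. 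In each case (a) follows. A handful of degenerate graphs with $\delta\le2$ (such as $G=C_3$, where a triangle bounds two faces) are checked by hand.

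The core of the proof is thus to show that, under the hypotheses, one of (i)--(iii) must occur. Suppose not, and let $v$ satisfy $d_G(v)=\delta\le\delta^*-1$. The failure of (i) forces every vertex at distance $\le2$ from $v$ to have degree $\ge\delta^*$, hence $\ge\delta^*+1$ because $V_{\delta^*}(G)=\emptyset$; the failure of (ii), (iii) gives, in addition, strong lower bounds on the degrees of the endpoints of those edges at $v$ that lie on triangular faces. If $\delta\ge3$, apply Lemma \ref{2connectednew} at $v$ to pass to $H=G+D$, still $2$-cell embedded in $\mathbb M$, with $\langle N_H(v),H\rangle$ Hamiltonian; thus $v$ is the hub of a wheel whose rim vertices all have $G$-degree $\ge\delta^*+1$, and likewise for every minimum-degree vertex (these ``light'' vertices are pairwise at distance $\ge3$, again by the failure of (i), so their wheels are vertex-disjoint). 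Combining this ``heaviness of the radius-$2$ balls'' with a discharging count on the $2$-cell embedding and Euler's identity \eqref{eq:euler}, one should contradict the maximality of $\delta^*=\delta_{max}^{\mathbb M}$: morally, deleting the light vertices and inserting the rim edges supplied by Lemma \ref{2connectednew} exhibits a subgraph that is $2$-cell embedded in $\mathbb M$ and has minimum degree exceeding $\delta^*$, which is impossible.

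The step I expect to be the real obstacle is this last one --- turning the purely local information ``a minimum-degree vertex sees only degree-$\ge(\delta^*+1)$ vertices out to distance $2$'' into a genuine Euler-type contradiction. The awkward point is that one cannot in general raise a light vertex's degree by adding chords inside the faces at it (if all those faces are triangles there is no room for chords), so the contradiction must come from a careful global accounting of the charge $\sum_{x\in V(G)}\bigl(6-d_G(x)\bigr)$ against the face contributions --- the same bookkeeping as in the proofs of Theorem \ref{bBprim} and Corollary \ref{delta}, now pushed to exclude \emph{every} $2$-cell embedded graph that has no vertex of degree $\delta^*$ and no light configuration. Everything else (the reduction of (b) to (a) via Lemma \ref{fivedegree}, the degenerate cases with $\delta\le2$, and the face/common-neighbour observations of the second paragraph) is routine.
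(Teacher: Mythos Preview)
Your architecture is the paper's: reduce (b) to (a) via Lemma~\ref{fivedegree}; for (a), assume $B'(G)$ is too large, apply Lemma~\ref{2connectednew} at every vertex of $V_{\le s-1}$ (with $s=\delta_{max}^{\mathbb M}$; these vertices are pairwise at distance $\ge 3$), and then delete $V_{\le s-1}$ to obtain a graph $H$ embedded in $\mathbb M$ with $\delta(H)\ge s+1$, contradicting the definition of $\delta_{max}^{\mathbb M}$. Your ``moral'' description of the endgame is exactly what the paper does, and no discharging is needed.

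The gap is your detour through triangular \emph{faces}. The quantity $b_3$ counts common neighbours, not faces, so conditions (ii) and (iii) witness $b_3(G)\le 2s-3$, but their failure is strictly weaker than $b_3(G)\ge 2s-2$, and that weaker hypothesis does not push the deletion through. Concretely: let $v$ be light of degree $r$ and let $w\in N_G(v)$ have $|N_G(v)\cap N_G(w)|=2$, with neither face at $vw$ a triangle. Failure of (i)--(iii) gives only $d_G(w)\ge 2s-1-r$. In $G_k$ the two rim-neighbours of $w$ (consecutive to $w$ in the rotation at $v$) may already be the two common neighbours of $v$ and $w$ in $G$, so $w$ gains no new edges; after deleting $v$ you get merely $d_H(w)\ge 2s-2-r$, which for $r=s-1$ is $s-1$, not $s+1$. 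The paper avoids this by assuming $B'(G)\ge 2s-2$ directly: then $b_3(G)\ge 2s-2$ forces, for any edge $vw$ with $d_G(v)=r$ and $j:=|N_G(v)\cap N_G(w)|$, the bound $d_G(w)\ge 2s-1-r+\min\{j,2\}$. Since the Hamiltonian rim supplied by Lemma~\ref{2connectednew} gives $w$ at least $2-\min\{j,2\}$ new neighbours in $N_G(v)$, one always gets $d_{G_k}(w)\ge 2s+1-r$ (this is the paper's Claim~3), hence $d_H(w)\ge 2s-r\ge s+1$. With that single line in hand the contradiction is immediate, and the ``real obstacle'' you anticipate disappears.
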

There are infinitely many planar graphs $G$ without degree $\delta_{max}^{\mathbb{S}_0} =5$ vertices 
for which $B^{\prime}(G) = 2\delta_{max}^{\mathbb{S}_0} - 3 =7$. 
One such a graph is depicted in Figure  \ref{fig:bez5}.
 Notice that for a planar graph $G$ without degree $5$ vertices, the inequalities  
$b(G) \leq 7$ and $B(G) \leq 7$ due to  Kang and  Yuan \cite{KangYuan} and Huang and Xu \cite{JHJMH}, respectively. 

\begin{figure} \label{dmax}
	\centering
		\includegraphics{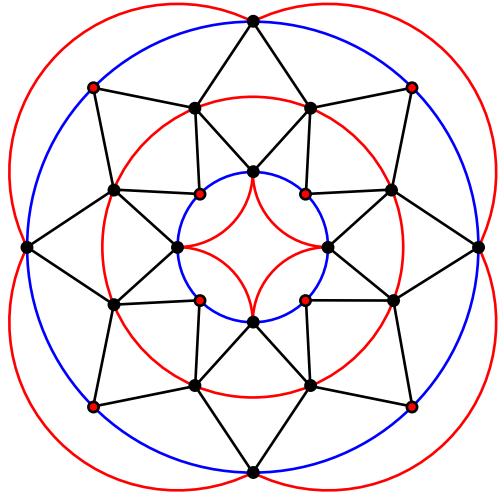}
	\caption{A planar graph $F$ (without degree $5$-vertices) having $B(F) = B^{\prime} (F) =7$}
	\label{fig:bez5}
\end{figure}
By Theorem \ref{deltamax}  and Corollary \ref{pet}(i) it immediately follows:
\begin{corollary} \label{S0N1}
If $G$ is $2$-cell embedded in $\mathbb{M}\in \{\mathbb{S}_0, \mathbb{N}_1\}$ 
           then $b(G) \leq B^{\prime}(G) \leq 8$. 
\end{corollary}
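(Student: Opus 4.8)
The plan is to dispose of the statement by a two-case analysis according to whether $G$ has a vertex of degree $5$, since by Lemma~\ref{fivedegree} every graph embeddable in $\mathbb{S}_0$ or $\mathbb{N}_1$ has minimum degree at most $5$, and both surfaces attain this value (the icosahedral graph is planar and $5$-regular), so $\delta_{max}^{\mathbb{S}_0} = \delta_{max}^{\mathbb{N}_1} = 5$.

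First I would treat the case $V_5(G) = \emptyset$. If $G$ is $2$-cell embedded in $\mathbb{S}_0$ then $G$ is planar, and if $G$ is $2$-cell embedded in $\mathbb{N}_1$ then $G$ is embeddable in $\mathbb{N}_1$; in either case the surface $\mathbb{M}^{\ast}$ of largest Euler characteristic admitting an embedding of $G$ lies in $\{\mathbb{S}_0, \mathbb{N}_1\}$, because $2$ and $1$ are the two largest Euler characteristics of any surface. Hence $\delta_{max}^{\mathbb{M}^{\ast}} = 5$, and as $G$ has no vertex of this degree, Theorem~\ref{deltamax}(a) gives $b(G) \leq B^{\prime}(G) \leq 2\delta_{max}^{\mathbb{M}^{\ast}} - 3 = 7 \leq 8$.

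Then I would treat the case $V_5(G) \neq \emptyset$, where it suffices to verify the numerical hypothesis of Corollary~\ref{pet}(i). Since $\mathbb{M} \in \{\mathbb{S}_0, \mathbb{N}_1\}$ we have $\chi(\mathbb{M}) \geq 1$, so $-14\chi(\mathbb{M}) < 0$, whereas $V_5(G) \neq \emptyset$ forces $\beta_0(\left\langle V_5, G\right\rangle) \geq 1$ and the remaining terms $4|V_{\leq 3}|$, $2|V_4|$ are nonnegative; therefore
\[
-14\chi(\mathbb{M}) < 1 \leq 4|V_{\leq 3}| + 2|V_{4}| + \beta_0(\left\langle V_5, G\right\rangle).
\]
Thus Corollary~\ref{pet}(i) applies and yields $b(G) \leq B^{\prime}(G) \leq 8$, which together with the previous case completes the proof.

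There is no genuinely hard step: the corollary merely repackages Theorem~\ref{deltamax} and Corollary~\ref{pet}(i). The only point that needs a moment's attention is in the first case, namely checking that the ``surface of largest Euler characteristic'' featuring in Theorem~\ref{deltamax} is again one of $\mathbb{S}_0$, $\mathbb{N}_1$, so that $\delta_{max}$ for it is still $5$; once this is noted the two cases are exhaustive and the uniform bound $8$ drops out.
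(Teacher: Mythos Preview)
Your proposal is correct and follows exactly the paper's approach: the paper states that Corollary~\ref{S0N1} ``immediately follows'' from Theorem~\ref{deltamax} and Corollary~\ref{pet}(i), and you have simply spelled out the two cases $V_5(G)=\emptyset$ and $V_5(G)\neq\emptyset$ that make this immediate. One cosmetic remark: your parenthetical about the icosahedron only witnesses $\delta_{max}^{\mathbb{S}_0}=5$; for $\delta_{max}^{\mathbb{N}_1}=5$ one should instead cite $K_6$, which is $5$-regular and $2$-cell embeds in $\mathbb{N}_1$.
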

\begin{figure}[h]
	\centering
		\includegraphics{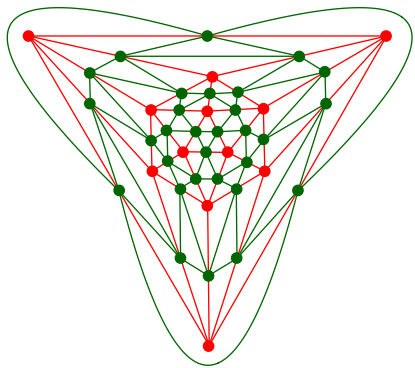}
	\caption{A planar triangulation $H$ with $B^{\prime}(H) = 8$}
	\label{fig:greenmaroon}
\end{figure}
The inequalities  $b(G) \leq 8$ and $B(G) \leq 8$ for planar graphs,  was proven by 
  Kang and  Yuan \cite{KangYuan} and Huang and Xu \cite{JHJMH}, respectively. 
Consider the planar graph $H$ shown in Figure \ref{fig:greenmaroon}  
(this graph is taken from \cite{JHJMH}).
 Each edge of $H$ belongs to exactly $2$ triangles, $\delta (H) = 5$, $\Delta (H) = 6$ and 
 all neighbors of any degree $5$ (red) vertex are degree $6$ (green) vertices.  
 This implies $B(H) = B^{\prime}(H) = 8$. Hence the upper bound for $B^{\prime}(G)$ in Corollary \ref{S0N1} is tight 
  when $\mathbb{M}= \mathbb{S}_0$.    
  
 Carlson and  Develin \cite{CarlsonDevelin} showed  that 
there exist planar graphs with bondage number $6$.
It is not known whether there is a planar graph $G$ with $b(G) \in \{7,8\}$. 

Consider the projective-planar graph $R$ depicted in Figure \ref{fig:Proj88}. 
Note that $R$ is a triangulation, each edge of $R$ is in exactly $2$ triangles, $\delta (R) = 5$,  
 there is no adjacent  degree $5$ (red) vertices and there is a degree $5$ vertex 
 adjacent to a degree $6$ (black) vertex.   
 This implies $B(R) = B^{\prime}(R) = 8$. Hence the upper bound for $B^{\prime}(G)$ in Corollary \ref{S0N1} is tight 
 when $\mathbb{M}= \mathbb{N}_1$. 
  Note that in the case when $\mathbb{M} = \mathbb{N}_1$, our result  
 is better than $b(G) \leq 10$ which was recently and independently obtained by Gagarin and Zverovich \cite{GagarinZverovich2} and 
 by the present author \cite{samajc}. 
\begin{figure}[htbp]
	\centering
		\includegraphics{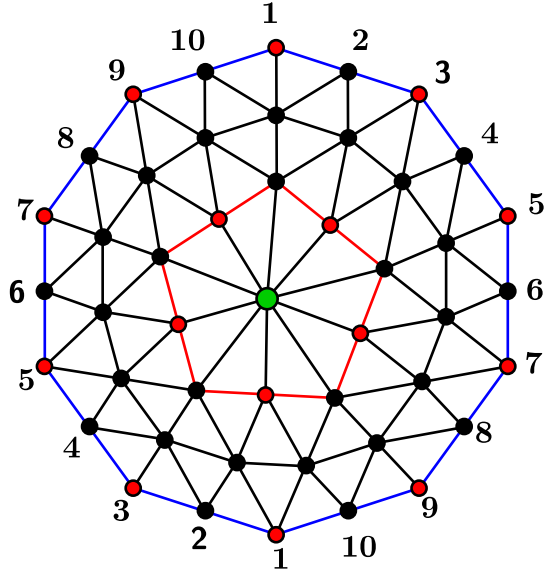}
	\caption{A projective-planar triangulation $R$ with $B^{\prime}(R) = 8$}
	\label{fig:Proj88}
\end{figure}

 It is well known that the non-orientable genus of $K_6$ is  $1$  \cite{Ringel}. 
Hence by Theorem \ref{CaDe} we obtain:
\begin{proposition}  \label{pro6}
There  exist projective-planar graphs with  bondage number $6$. 
In particular, $b(K_6 \circ K_1) =6$.
\end{proposition}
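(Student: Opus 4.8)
The plan is to combine the known formula for the bondage number of a corona with a standard embeddability fact about $K_6$. By Theorem~\ref{CaDe}, for any graph $H$ we have $b(H\circ K_1) = \delta(H)+1$; applying this with $H = K_6$ gives $b(K_6\circ K_1) = \delta(K_6)+1 = 5+1 = 6$. So the only thing to verify is that $K_6\circ K_1$ is (2-cell) embeddable in the projective plane $\mathbb{N}_1$.

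First I would recall that the non-orientable genus of $K_6$ is $1$, i.e. $K_6$ embeds in $\mathbb{N}_1$ but not in the sphere — a classical fact (see \cite{Ringel}), which the excerpt explicitly permits me to cite. Next I would argue that attaching pendant vertices does not increase the non-orientable genus: given an embedding of $K_6$ in $\mathbb{N}_1$, each vertex $v$ of $K_6$ lies on the boundary of some face of the embedding, and inside that face one can place the pendant neighbor of $v$ and draw the single edge $v$–(pendant) without any crossing. Doing this for all six vertices (using, if necessary, distinct faces or distinct portions of the same face) yields an embedding of $K_6\circ K_1$ in $\mathbb{N}_1$. Hence $K_6\circ K_1$ is a projective-planar graph, and by the computation above $b(K_6\circ K_1)=6$, which also establishes the existence assertion of the proposition.

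The main (and only mildly nontrivial) obstacle is the embeddability claim: one must be careful that the pendant vertices and their edges can all be added simultaneously inside faces of a fixed embedding of $K_6$ without forcing any crossings. This is routine — in any 2-cell embedding every vertex is incident with at least one face, and a pendant edge drawn into the interior of an incident face never meets another edge — but it is the step that actually uses the topology rather than just quoting Theorem~\ref{CaDe}. Everything else (the value $\delta(K_6)=5$ and the invocation of Theorem~\ref{CaDe}) is immediate.
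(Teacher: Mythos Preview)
Your proof is correct and follows exactly the paper's approach: cite that the non-orientable genus of $K_6$ is $1$ and apply Theorem~\ref{CaDe} to get $b(K_6\circ K_1)=\delta(K_6)+1=6$. In fact you are slightly more careful than the paper, which simply asserts the proposition from these two facts without spelling out that appending the six pendant vertices preserves embeddability in $\mathbb{N}_1$.
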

Corollary \ref{S0N1} and Proposition \ref{pro6} show that the maximum value of the bondage number 
 of projective-planar graph is $6,7$ or $8$. 
\begin{question}  \label{q1}
Is there a projective-planar graph $G$ with  $b(G)  \in \{7,8\}$? 
\end{question}
In the next corollary we improve the known upper bound for 
the bondage number of Klein bottle graphs from $11$ (Gagarin and Zverovich \cite{GagarinZverovich2}) to $9$.
 \begin{corollary} \label{S1N2}
Let $G$ be $2$-cell embedded in $\mathbb{M}\in \{\mathbb{S}_1, \mathbb{N}_2\}$. 
            Then $b(G) \leq B^{\prime}(G) \leq 9$. 
            Moreover,  $B^{\prime}(G)=9$ if and only if $G$ is a 
            $6$-regular triangulation in $\mathbb{M}$.
\end{corollary}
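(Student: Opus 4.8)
The plan is to apply Corollary \ref{S1N2}'s hypothesis together with Theorem \ref{samczech} and an edge-counting argument based on Euler's formula. First I would note that the torus and Klein bottle both have $\chi(\mathbb{M}) = 0$, so $\delta_{max}^{\mathbb{M}} = \lfloor(5+\sqrt{49})/2\rfloor = 6$ by Lemma \ref{fivedegree}. If $G$ has no vertex of degree $6$, then $G$ has no vertex of degree $\delta_{max}^{\mathbb{M}}$, and Theorem \ref{deltamax}(a) gives $b(G) \leq B^{\prime}(G) \leq 2\cdot 6 - 3 = 9$ immediately. So the remaining case is when $V_6(G) \neq \emptyset$, and here I would try to show $B^{\prime}(G) \leq 9$ unless $G$ is a $6$-regular triangulation.

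The key step for the case $V_6(G) \neq \emptyset$ is to produce an edge $xy \in E(G)$ with $d_G(x) + d_G(y) - 3 \leq 9$, i.e. $d_G(x) + d_G(y) \leq 12$, which would bound $b_3(G)$ and hence $B^{\prime}(G)$. If not every vertex has degree exactly $6$, I would use a discharging or averaging argument with Euler's formula on $\mathbb{M}$ (where $|G| - \|G\| + f(G) = 0$, and $2\|G\| \geq 3f(G)$ when $G$ is simple, giving $\|G\| \leq 3|G|$, i.e. $ad(G) \leq 6$) to locate a low-degree vertex adjacent to a not-too-high-degree vertex. Since $ad(G) \leq 6$ with equality iff $G$ is a triangulation, if $G$ is not a triangulation then $ad(G) < 6$, so $b_1(G) \leq 2ad(G) - 1 < 11$, and one must refine this to get $\leq 9$; alternatively, invoke Theorem \ref{samczech}: $b_2(G) \leq \Delta(G) + 3$, and combine with $b_1(G) \leq 2ad(G)-1$. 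The equality analysis in Theorem \ref{samczech} (conditions (P3), (P4)) should be leveraged: $B^{\prime}(G) = 9$ forces a very rigid structure, and among those only the $6$-regular triangulation survives because $B'$ also caps at $b_1 \le 2ad(G)-1 = 11$ but the triangle structure forces $b_3$ down to $9$.

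For the ``moreover'' direction, I would argue both implications. If $G$ is a $6$-regular triangulation in $\mathbb{M}$, then every $xy \in E(G)$ has $d_G(x) + d_G(y) - 3 = 9$, so $b_3(G) \geq 9$; and since each edge lies in exactly two triangles in a triangulation, $|N_G(x) \cap N_G(y)| = 2$ for every edge, giving $b_2(G) = 6 + 6 - 1 - 2 = 9$, hence $b_3(G) = 9$; also $b_1(G) = 11 > 9$, so $B^{\prime}(G) = \min\{b_1(G), b_3(G)\} = 9$. Conversely, if $B^{\prime}(G) = 9$ then $b_3(G) = 9$ (since $b_3 \le 2s-3 = 9$ would follow if some vertex had degree $\le 6$ neighboring appropriately — one needs $b_1 \ge 9$ too, forcing $ad(G)$ close to $6$, hence $G$ a triangulation, hence $6$-regular after checking degrees via Euler's formula).

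The main obstacle I anticipate is the sharp equality characterization: it is routine to get $b(G) \leq 9$, but pinning down exactly that $B^{\prime}(G) = 9$ occurs \emph{only} for $6$-regular triangulations requires carefully combining the structural constraints from $b_3(G) = 9$ (which limits how edges sit in triangles and bounds vertex degrees), the constraint $b_1(G) \geq 9$ (which forces $ad(G) \ge 5$, so via Euler many vertices of high degree), and Euler's formula on $\chi = 0$ to rule out non-regular or non-triangulated configurations — essentially redoing part of the analysis in Theorem \ref{samczech} while tracking $b_3$ rather than $b_2$. I would handle this by first showing any vertex of degree $\leq 5$ or any edge in fewer than two triangles yields $B^{\prime}(G) \le 8$, then concluding $G$ is a triangulation with $\delta(G) \ge 6$, and finally using $ad(G) \le 6$ on a triangulation of $\chi = 0$ to force $6$-regularity.
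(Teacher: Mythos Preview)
There are two concrete errors in your outline, and one gap that turns out to be the whole point.

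\textbf{Error 1.} Your ``key step'' for the case $V_6(G)\neq\emptyset$ --- exhibit an edge $xy$ with $d_G(x)+d_G(y)\le 12$ and conclude $b_3(G)\le 9$ --- does not work. By definition
\[
b_3(G)=\min_{xy\in E(G)}\max\{\,d_G(x)+d_G(y)-1-|N_G(x)\cap N_G(y)|,\ d_G(x)+d_G(y)-3\,\},
\]
so controlling only the second term inside the $\max$ gives nothing unless you also bound the first. An edge with $d(x)=d(y)=6$ and $|N(x)\cap N(y)|\le 1$ contributes at least $10$, not $9$.

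\textbf{Error 2.} Invoking Theorem~\ref{samczech} yields $b_2(G)\le\Delta(G)+3$, but $b_3(G)\ge b_2(G)$ always (each term in the defining minimum for $b_3$ majorizes the corresponding $b_2$-term), so an upper bound on $b_2$ does not bound $B'(G)=\min\{b_1,b_3\}$. Theorem~\ref{samczech} is about $b_2$ and hence $B$, not $B'$.

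\textbf{Gap.} Your final paragraph asserts ``any vertex of degree $\le 5$ \dots\ yields $B'(G)\le 8$'' without a method. That assertion is precisely the content of the paper's proof, and it is not obtained by discharging or by Theorem~\ref{samczech}. The paper splits on $\delta(G)$, not on whether $V_6$ is empty: if $\delta(G)\ge 6$ then Euler's formula forces $G$ to be a $6$-regular triangulation and one checks $B'(G)=9$; if $V_5(G)\ne\emptyset$ then Corollary~\ref{pet}(i) with $\chi=0$ gives $B'(G)\le 8$ directly; if $V_{\le 4}(G)\ne\emptyset$ and $V_5(G)=\emptyset$, one assumes $B'(G)\ge 9$, builds the auxiliary supergraph $G_k$ of Lemma~\ref{pomost} with $s=5$, and shows that $H=G_k-V_{\le 4}$ is embedded in $\mathbb{M}$ with $\delta(H)\ge 6$ and $\Delta(H)\ge 7$, contradicting $ad(H)\le 6$ from Lemma~\ref{edge}. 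The edge-addition Lemma~\ref{pomost} is the missing mechanism; without it (or a genuine substitute), the case $V_6\ne\emptyset$ --- and with it the ``only if'' direction of the equality statement --- remains open in your outline. Note too that your $V_6=\emptyset$ case via Theorem~\ref{deltamax} gives only $B'(G)\le 9$, not $\le 8$, so it does not by itself settle the equality characterization either.
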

It is an immediate consequence of Euler's formula that any $6$-regular 
graph embedded in $\mathbb{M}\in \{\mathbb{S}_1, \mathbb{N}_2\}$  is a triangulation. 
Altshuler \cite{a} found a characterization of $6$-regular toroidal  graphs and 
 Negami \cite{n} characterized $6$-regular graphs which embed in the Klein bottle.
Moreover,  no $6$-regular graph embeds in both the torus and the Klein bottle \cite{ln}. 
The inequality  $b(G) \leq 9$ for toroidal graphs,  was proven by Hou and Liu \cite{JHGL}.   
 They also showed that there exist toroidal  graphs with bondage number $7$. 
The next result immediately follows by Theorem \ref{CaDe}.
\begin{proposition}  \label{pro7}
Let $H$ be a 6-regular triangulation in $\mathbb{M}\in \{\mathbb{S}_1, \mathbb{N}_2\}$. 
Then $b(H \circ K_1) =7$.
\end{proposition}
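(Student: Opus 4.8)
The plan is to combine Theorem~\ref{CaDe} with Corollary~\ref{S1N2}. First I would verify that the graph $G = H \circ K_1$ is $2$-cell embeddable in the same surface $\mathbb{M}$ as $H$; in fact, whenever $H$ is $2$-cell embedded in $\mathbb{M}$, one may attach each pendant vertex inside a face incident to its neighbor, keeping every face a disc, so $G$ is $2$-cell embedded in $\mathbb{M}$. Since $H$ is a $6$-regular triangulation in $\mathbb{M} \in \{\mathbb{S}_1, \mathbb{N}_2\}$, we have $\delta(H) = 6$, and Theorem~\ref{CaDe} gives $b(H \circ K_1) = \delta(H) + 1 = 7$ directly. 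So the equality $b(H \circ K_1) = 7$ is immediate from Theorem~\ref{CaDe} alone, with no reference to embeddings needed at all.

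There is, however, a subtle point worth addressing: the statement is placed right after Corollary~\ref{S1N2}, which asserts $b(G) \leq B'(G) \leq 9$ for any graph $2$-cell embedded in $\mathbb{M} \in \{\mathbb{S}_1, \mathbb{N}_2\}$, with equality in the $B'$-bound precisely for $6$-regular triangulations. The role of this proposition is evidently to show that the extremal-for-$B'$ graphs $H$ do \emph{not} themselves achieve large bondage number, but their coronas are still far from trivial, realizing bondage number $7$. So the second thing I would check is that such $6$-regular triangulations actually exist in both surfaces — which is guaranteed by the cited characterizations of Altshuler~\cite{a} for the torus and Negami~\cite{n} for the Klein bottle — so that the proposition is non-vacuous. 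The remark that no $6$-regular graph embeds in both surfaces~\cite{ln} is not needed for the proof, only for context.

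The main (and essentially only) obstacle is the application of Theorem~\ref{CaDe}, which is a black box here: one must simply confirm its hypothesis, namely that $G$ has the form $H \circ K_1$ with $H$ a graph — true by construction — and that $\delta(H) = 6$, true since $H$ is $6$-regular. Thus the proof is one line: \emph{By Theorem~\ref{CaDe}, $b(H \circ K_1) = \delta(H) + 1 = 6 + 1 = 7$, since $H$ is $6$-regular.} If the authors wish to emphasize the connection to Corollary~\ref{S1N2}, they might add a sentence noting that $H$ ranges over exactly the graphs attaining the bound $B'(H) = 9$, so the proposition exhibits graphs related to the extremal case whose bondage number is $7$. No genuine computation or embedding argument is required beyond invoking the two cited results.
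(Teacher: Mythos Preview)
Your proposal is correct and matches the paper's own justification exactly: the paper states that the result ``immediately follows by Theorem~\ref{CaDe},'' i.e., $b(H\circ K_1)=\delta(H)+1=7$ since $H$ is $6$-regular. The additional remarks you make about embeddability of the corona and existence of such triangulations are correct context but are not part of (and are not needed for) the proof.
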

By Corollary \ref{S1N2} and Proposition \ref{pro7} it immediately follows that the maximum value of the bondage number 
 of graph embeddable on surface with Euler characteristic $0$ is $7, 8$ or $9$. 
 The following question naturally arises. 
\begin{question}  \label{q2}
Is there a toroidal graph $G$ with  $b(G) \in \{8,9\}$? 
Is there a Klein bottle  graph $G$ with  $b(G)\in \{8,9\}$? 
\end{question}

\begin{proposition}\label{3d2}
Let $G$ be a connected  toroidal or Klein bottle graph 
and let $\mu \in \{b, b_2\}$.  
\begin{itemize}
\item[(i)] If $\mu (G) > \frac{3}{2}\Delta (G)$ then either 
$4 \leq \delta (G) \leq \Delta (G) \leq 5$ or $G$ is $3$-regular.
\item[(ii)] If $\mu (G) = \frac{3}{2}\Delta (G)$ then 
either 
$G$ is $6$-regular and no edge of $G$ belongs to at least $3$ triangles 
or 
$3 \leq \delta (G) \leq \Delta (G) =4$. 
\end{itemize}
\end{proposition}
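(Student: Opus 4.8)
The plan is to sandwich $\mu(G)$ between two upper bounds and then read off the degree restrictions. On the upper side, $\mu(G)\le b_2(G)$ — this is Theorem \ref{d2}(ii) when $\mu=b$, and trivial when $\mu=b_2$ — and $b_2(G)\le \Delta(G)+3$ by Theorem \ref{samczech}. At the same time, for every edge $uv$ of $G$ one has $b_2(G)\le d_G(u)+d_G(v)-1-|N_G(u)\cap N_G(v)|\le d_G(u)+d_G(v)-1$, so taking $u$ with $d_G(u)=\delta(G)$ and $v$ a neighbour of $u$ (which exists since $G$ is connected with at least one edge) gives $b_2(G)\le \delta(G)+\Delta(G)-1$, hence $\mu(G)\le\delta(G)+\Delta(G)-1$ in either case. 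Lastly, $\mu(G)$ is an integer, so $\mu(G)=\frac{3}{2}\Delta(G)$ can hold only when $\Delta(G)$ is even. Everything else is a short numerical deduction.

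For (i) I would argue: from $\mu(G)>\frac{3}{2}\Delta(G)$ and $\mu(G)\le\Delta(G)+3$ one gets $\Delta(G)\le 5$, and from $\mu(G)>\frac{3}{2}\Delta(G)$ and $\mu(G)\le\delta(G)+\Delta(G)-1$ one gets $\delta(G)>\frac{1}{2}\Delta(G)+1$. If $\Delta(G)\le 2$ the latter forces $\delta(G)>\Delta(G)$, which is impossible, so $\Delta(G)\in\{3,4,5\}$. If $\Delta(G)\in\{4,5\}$ then $\delta(G)\ge 4$, so $4\le\delta(G)\le\Delta(G)\le 5$; if $\Delta(G)=3$ then $\delta(G)\ge 3=\Delta(G)$, so $G$ is $3$-regular. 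This is exactly the stated dichotomy.

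For (ii) I would argue similarly: $\mu(G)=\frac{3}{2}\Delta(G)$ makes $\Delta(G)$ even, and $\frac{3}{2}\Delta(G)\le\Delta(G)+3$ gives $\Delta(G)\le 6$, so $\Delta(G)\in\{2,4,6\}$. If $\Delta(G)=2$ then $3=\mu(G)\le\delta(G)+1$ forces $\delta(G)=2$, so $G$ would be a $2$-regular connected graph, i.e.\ a cycle; but a cycle admits no $2$-cell embedding in a surface of Euler characteristic $0$ and so is not a toroidal or Klein bottle graph, so this case does not occur. If $\Delta(G)=4$ then $6=\mu(G)\le\delta(G)+\Delta(G)-1=\delta(G)+3$, so $\delta(G)\ge 3$ and $3\le\delta(G)\le\Delta(G)=4$, the second alternative. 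If $\Delta(G)=6$ then $\mu(G)=9$, and since $9=\mu(G)\le b_2(G)\le\Delta(G)+3=9$ we get $b_2(G)=\Delta(G)+3$; the equality characterization in Theorem \ref{samczech} then forces either $G$ $4$-regular (impossible, as $\Delta(G)=6$) or $G$ $6$-regular with no edge lying in at least $3$ triangles, the first alternative.

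I do not expect a real obstacle: the substantive work is already packaged in Theorem \ref{samczech}. The only points needing a moment's care are the low-degree boundary cases — in (i) one must notice that $\Delta(G)\le 2$ is excluded by the inequality $\mu(G)\le\delta(G)+\Delta(G)-1$ alone, and in (ii) one must invoke that a cycle is not a toroidal or Klein bottle graph to dispose of $\Delta(G)=2$ — together with the fact that the equality statement of Theorem \ref{samczech} is precisely what pins down the $6$-regular graphs in part (ii).
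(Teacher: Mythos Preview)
Your proof is correct and follows essentially the same route as the paper's: both combine $\mu(G)\le b_2(G)\le\Delta(G)+3$ from Theorem~\ref{samczech} with the elementary bound $\mu(G)\le\delta(G)+\Delta(G)-1$, and both invoke the equality characterization of Theorem~\ref{samczech} to pin down the $6$-regular case in~(ii). The only presentational difference is that the paper disposes of $\Delta(G)\le 2$ once, upfront, by noting that a $2$-cell embedding in a surface of Euler characteristic~$0$ forces $\Delta(G)\ge 3$, whereas you handle the low-degree boundary separately in each part; both treatments are valid.
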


\begin{problem}
Find $\max\{b(G)\mid G \ \mbox{is a $6$-regular triangulation in}\ \mathbb{M} \in \{\mathbb{S}_1, \mathbb{N}_2\}$
and no edge of $G$ belongs to at least $3$ triangles\}.   
Find $\max\{b(G)\mid G\  \mbox{is a $4$-regu-}$
\\
\mbox{lar graph embeddable in}\ $\mathbb{M} \in \{\mathbb{S}_1, \mathbb{N}_2\}\}$.
\end{problem}

 For any  graph $G$, which is embeddable  in $\mathbb{N}_3$,
  Gagarin and Zverovich \cite{GagarinZverovich2} 
proved $b(G) \leq 14$. We improve this bound in the following corollary. 
\begin{corollary} \label{N3}
Let $G$ be a graph  embeddable in $\mathbb{N}_3$. 
Then $b(G) \leq B^{\prime} (G) \leq 10$. 
If $G$ has no degree $6$ vertices then $b(G) \leq B^{\prime}(G) \leq 9$. 
If $G$ has $15$ mutually nonadjacent degree $5$ vertices, 
then  $b(G) \leq B^{\prime} (G) \leq 8$. 
\end{corollary}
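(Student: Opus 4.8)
\textbf{Proof proposal for Corollary \ref{N3}.}
The plan is to derive all three bounds from Theorem \ref{bBprim} (and its consequences Corollary \ref{pet} and Corollary \ref{delta}) by bookkeeping on degrees, using that $\mathbb{N}_3$ has $\chi = -1$ and that by Lemma \ref{fivedegree} every graph embeddable in $\mathbb{N}_3$ has $\delta(G) \leq \lfloor (5+\sqrt{49-24\chi})/2\rfloor = \lfloor(5+\sqrt{73})/2\rfloor = 6$. First I would reduce to the connected $2$-cell embedded case: if $G$ is embeddable in $\mathbb{N}_3$ but not $2$-cell embedded there, then $G$ embeds $2$-cell in some surface $\mathbb{M}$ with $\chi(\mathbb{M}) \geq \chi(\mathbb{N}_3) = -1$, so either $\chi(\mathbb{M}) = -1$ and we are in the same situation, or $\chi(\mathbb{M}) \geq 0$ and the stronger bounds of Corollaries \ref{S0N1} and \ref{S1N2} already give $b(G) \leq B'(G) \leq 9$; if $G$ is disconnected, add edges to connect it inside the same surface (as in the proof of Lemma \ref{edge}), which only increases $B'$ downward-compatibly — more carefully, pass to a connected spanning supergraph and note the target bounds are monotone in the right direction. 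So assume $G$ is connected and $2$-cell embedded in $\mathbb{M}$ with $\chi(\mathbb{M}) = -1$, giving $-14\chi(\mathbb{M}) = 14$ and $-7\chi(\mathbb{M}) = 7$.

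For the first bound, I would split on $\delta(G)$. If $\delta(G) \leq 4$, then $b(G) \leq B'(G) \leq b_1(G) \leq \delta(G) + \Delta(G) - 1 \leq 4 + 6 - 1 = 9$, using $\Delta(G)\le 6$ from Lemma \ref{fivedegree} — wait, $\Delta$ need not be bounded; instead I use that $V_{\le 4}(G)\ne\emptyset$ forces a low-degree vertex adjacent to something, but to be safe I treat $\delta\le 4$ via Corollary \ref{pet} or directly: if $V_{\le 3}(G) \neq \emptyset$ the bound is even easier, and if $\delta(G) = 4$ one applies Corollary \ref{delta} with $\delta = 4$, needing $14 < 2(4-5)|G| = -2|G|$, which is false, so instead apply Theorem \ref{bBprim} with $s$ equal to the smallest degree $\geq 4$ appearing — the cleanest route is: if $G$ has a vertex of degree $\le 4$, then since $\delta\le 6$ and $V_6$ or $V_5$ questions are separate, handle it by noting $V_5(G)\cup V_6(G)$ controls the rest. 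Let me restructure: since $\delta(G)\le 6$, the relevant cases are $\delta(G)\in\{5,6\}$ (the $\delta\le 4$ case giving $b(G)\le b_1(G)$ small is routine once one observes a degree $\le 4$ vertex lies in a bounded-degree neighbourhood via Lemma \ref{2connectednew}). If $V_6(G)\ne\emptyset$, apply Corollary \ref{pet}(ii): we need $7 < 2|V_{\le 3}| + |V_4| + \beta_0(\langle V_6,G\rangle) + |G|$, which holds because $|G| \geq 8$ (indeed a $2$-cell embedding in $\mathbb{N}_3$ needs enough vertices; if $|G|$ were tiny one checks directly, or uses Lemma \ref{fivedegree}-type order bounds). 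If $V_6(G) = \emptyset$ then $\delta(G)=\Delta(G)$ is impossible in general but $\Delta(G)\le 5$, giving $b(G)\le b_1(G)\le 5+5-1 = 9$ — so in all subcases $b(G)\le B'(G)\le 10$, with the value $10$ only reachable when $V_6(G)\neq\emptyset$.

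For the second bound, if $G$ has no degree $6$ vertices then $\Delta(G) \leq 5$ by Lemma \ref{fivedegree}, hence directly $b(G) \leq B'(G) \leq b_1(G) \leq \Delta(G) + \Delta(G) - 1 \leq 9$; alternatively, if $V_5(G)\ne\emptyset$ apply Corollary \ref{pet}(i) with $-14\chi = 14 < 4|V_{\le 3}| + 2|V_4| + \beta_0(\langle V_5,G\rangle)$, which holds once $|G|$ is not too small, landing $b(G)\le 8$ — but the safe uniform statement is $b(G)\le 9$ via $b_1$. For the third bound, suppose $G$ has $15$ mutually nonadjacent degree $5$ vertices, so $\beta_0(\langle V_5,G\rangle) \geq 15$; apply Corollary \ref{pet}(i): the hypothesis $14 < 4|V_{\le 3}| + 2|V_4| + \beta_0(\langle V_5,G\rangle)$ is satisfied since $\beta_0(\langle V_5,G\rangle) \geq 15 > 14$, yielding $b(G) \leq B'(G) \leq 8$.

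\textbf{Main obstacle.} The delicate point is handling graphs of very small order, where the counting inequalities of Corollary \ref{pet} can fail for trivial reasons ($|V_{\le 3}|, |V_4|, |G|$ all small) even though the desired bound still holds; I expect to dispatch these with the crude estimate $b(G)\le b_1(G)\le \Delta(G)+\delta(G)-1$ together with the degree/order restrictions that a $2$-cell embedding in $\mathbb{N}_3$ imposes (via Euler's formula \eqref{eq:euler} and Lemma \ref{fivedegree}), plus the fall-back to Corollaries \ref{S0N1} and \ref{S1N2} when the embedding is not $2$-cell in $\mathbb{N}_3$ itself. The rest is routine arithmetic with $\chi = -1$.
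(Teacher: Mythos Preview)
Your argument contains a genuine gap: you repeatedly assert that if $G$ has no degree~$6$ vertices then $\Delta(G)\le 5$, but this is false. Lemma~\ref{fivedegree} bounds $\delta(G)$, not $\Delta(G)$; the statement $\delta_{max}^{\mathbb{N}_3}=6$ means only that every graph embeddable in $\mathbb{N}_3$ has \emph{minimum} degree at most $6$. A graph embeddable in $\mathbb{N}_3$ can certainly have vertices of degree $7$, $8$, $100$, etc., while having no vertex of degree exactly $6$. So both your ``$V_6(G)=\emptyset \Rightarrow \Delta(G)\le 5 \Rightarrow b_1(G)\le 9$'' step in the first bound and your entire argument for the second bound collapse. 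Your fallback for the second bound via Corollary~\ref{pet}(i) also fails: the hypothesis $14 < 4|V_{\le 3}| + 2|V_4| + \beta_0(\langle V_5,G\rangle)$ need not hold (take a single degree-$5$ vertex and no smaller degrees).

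The paper handles the $V_6(G)=\emptyset$ case with Theorem~\ref{deltamax}, a separate structural result: if $G$ has no vertex of degree $\delta_{max}^{\mathbb{M}}$, then $B'(G)\le 2\delta_{max}^{\mathbb{M}}-3$. With $\delta_{max}^{\mathbb{N}_3}=6$ this gives $B'(G)\le 9$ directly, covering both the second statement of the corollary and the $V_6=\emptyset$ subcase of the first. The $V_6\ne\emptyset$ subcase and the third statement you handle essentially as the paper does, via Corollary~\ref{pet}(ii) and (i) respectively; for the former the paper uses $|G|\ge 7$ (since $q(K_7)=3$) together with $\beta_0(\langle V_6,G\rangle)\ge 1$ to get $7 < 1+7$, rather than your unexplained $|G|\ge 8$.
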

Since the non-orientable genus of $K_7$ is  $3$  \cite{Ringel}, 
 by Theorem \ref{CaDe} we obtain:
\begin{proposition}  \label{N33}
There  exist  graphs embeddable on $\mathbb{N}_3$ with  bondage number $7$. 
One of them is  $K_7 \circ K_1$.
\end{proposition}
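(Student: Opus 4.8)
The plan is to combine the known fact that the non-orientable genus of $K_7$ equals $3$ with Theorem~\ref{CaDe} applied to the graph $K_7\circ K_1$. First I would observe that $K_7\circ K_1$ is embeddable in $\mathbb{N}_3$: take an embedding of $K_7$ in $\mathbb{N}_3$; every face of this embedding is a region of the surface, and for each vertex $v$ of $K_7$ we may place its pendant copy of $K_1$ (a single new vertex $v'$) inside one of the faces incident to $v$ and draw the edge $vv'$ into that face without creating a crossing. Since $K_7$ has $7$ vertices and an embedding in $\mathbb{N}_3$ has enough incident faces at each vertex, this can be carried out; hence $K_7\circ K_1$ embeds in $\mathbb{N}_3$.

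Next, I would apply Theorem~\ref{CaDe} with $H = K_7$: since $K_7\circ K_1$ has the form $H\circ K_1$, we get $b(K_7\circ K_1) = \delta(K_7) + 1 = 6 + 1 = 7$. This immediately yields a graph embeddable on $\mathbb{N}_3$ with bondage number exactly $7$, proving the proposition.

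The only genuinely delicate point is verifying that the corona $K_7\circ K_1$ still embeds in $\mathbb{N}_3$ rather than requiring a surface of higher genus. This is standard: attaching a pendant vertex to $v$ inside any face incident to $v$ never increases the genus, because the new vertex and edge lie entirely within an open disc (or within a face homeomorphic to an open region) bounded by edges already present, and distinct pendant vertices can be routed into distinct such faces or even the same face without conflict since they are non-adjacent to one another. Thus no handle or crosscap needs to be added, and the non-orientable genus of $K_7\circ K_1$ is still $3$. With this embedding in hand, the bondage-number computation is purely formal via Theorem~\ref{CaDe}, so the main obstacle — if any — is simply making the embedding argument precise, which is routine.
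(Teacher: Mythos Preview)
Your proposal is correct and follows exactly the paper's approach: cite that the non-orientable genus of $K_7$ is $3$ and apply Theorem~\ref{CaDe} to obtain $b(K_7\circ K_1)=\delta(K_7)+1=7$. The paper leaves the embeddability of the corona implicit, whereas you spell out the routine argument that attaching pendant vertices inside faces does not raise the genus; this extra detail is fine but not essential.
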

\begin{question}  \label{q3}
Is there a  graph $G$ embeddable in $\mathbb{N}_3$ with  $b(G)  \in \{8, 9,10\}$? 
\end{question}
We conclude our results in this section with a constant upper bound 
 on the bondage number of graphs  embeddable in 
$\mathbb{M} \in \{\mathbb{S}_2, \mathbb{N}_4\}$. For any such a graph $G$, 
$b(G) \leq 16$ (Gagarin and Zverovich \cite{GagarinZverovich2}). We improve 
this result as follows.
\begin{corollary}\label{S2N4}
Let $G$ be a graph  embeddable in $\mathbb{M} \in \{\mathbb{S}_2, \mathbb{N}_4\}$.  
Then $b(G) \leq 12$. 
\end{corollary}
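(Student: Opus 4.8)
The plan is to reduce to a connected graph $G$ that is $2$-cell embedded on a surface $\mathbb{M}'$ whose Euler characteristic $\chi$ is as large as possible, and then to split according to $|G|$ and the degree sequence. Deleting edges inside one component of a graph raises the domination number of the whole graph, so $b(G)=\min\{b(C):C\text{ a component of }G\}$, and every component of $G$ embeds wherever $G$ does; hence we may assume $G$ is connected, and then $\chi\geq\chi(\mathbb{M})=-2$. If $\chi\geq -1$ then $\mathbb{M}'$ is one of $\mathbb{S}_0,\mathbb{N}_1,\mathbb{S}_1,\mathbb{N}_2,\mathbb{N}_3$, and Corollaries \ref{S0N1}, \ref{S1N2} and \ref{N3} already give $b(G)\leq 10<12$. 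So assume $\chi=-2$; then $\mathbb{M}'\in\{\mathbb{S}_2,\mathbb{N}_4\}$, $G$ has a cycle, and Lemma \ref{fivedegree} yields $\delta(G)\leq\lfloor(5+\sqrt{97})/2\rfloor=7$, and $\delta_{max}^{\mathbb{M}'}=7$.

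First I would remove the graphs with many vertices. By Theorem \ref{SGZ}(ii), whose right-hand side is largest when the girth is $3$, $b(G)\leq 11+24/|G|$, which is strictly less than $13$ once $|G|\geq 13$; since $b(G)$ is an integer, $b(G)\leq 12$ in that range. So we may also assume $|G|\leq 12$.

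Then comes a case split by the degrees occurring in $G$. If $G$ has no vertex of degree $7$, Theorem \ref{deltamax}(b) gives $b(G)\leq B^{\prime}(G)\leq 2\cdot 7-3=11$. If $G$ has a vertex of degree $7$ (so $|G|\geq 8$) but no vertex of degree $6$, then $-14\chi=28<4|G|\leq 4|V_{\leq 3}|+2|V_4|+3\beta_0(\langle V_7,G\rangle)+4|G|$, so Corollary \ref{pet}(iii) gives $b(G)\leq B^{\prime}(G)\leq 12$. The remaining graphs have both a degree-$6$ and a degree-$7$ vertex, so $\delta(G)\leq 6$; for $\delta(G)=6$ I would invoke Corollary \ref{delta}, whose hypothesis is $14<|G|+\beta_0(\langle V_6,G\rangle)$, and for $\delta(G)\leq 5$ Theorem \ref{bBprim} with $s=7$, whose hypothesis (after substituting $|G|=\sum_j|V_j|$) reads $28<3\beta_0(\langle V_7,G\rangle)+8|V_{\leq 2}|+8|V_3|+6|V_4|+4|V_5|+2|V_6|+4|V_{\geq 7}|$. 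Each of these holds as soon as $|G|$, or the number of mutually nonadjacent low-degree vertices, is not too small; the finitely many residual dense graphs on at most $12$ vertices would be dispatched directly, using in each case the smallest available of: $b(G)\leq B^{\prime}(G)\leq d_G(x)+d_G(y)-3$ over edges $xy$ lying on two triangular faces, $b(G)\leq\frac{3}{2}\Delta(G)$ when $\gamma(G)\leq 3$ (Teschner \cite{Teschner}), and the fact that a near-universal vertex forces $\gamma(G)=1$ and hence a tiny bondage number.

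The hard part will be this residual case: the small graphs with $\delta(G)\in\{4,5,6\}$, $\Delta(G)\in\{9,10\}$ and roughly $10\leq|G|\leq 12$ that are $2$-cell embeddable in $\mathbb{S}_2$ or $\mathbb{N}_4$ but in no surface of larger Euler characteristic slip past the ready-made corollaries and past the averaging bound alike, and the three elementary bounds above do not obviously combine to cover all of them. I expect the clean way out is a short discharging/counting argument on the embedding -- in the spirit of the proof of Theorem \ref{bBprim} -- forcing either a light edge ($d_G(x)+d_G(y)\leq 15$) incident with two triangular faces or a domination number at most $3$; failing that, one would classify this bounded family of graphs explicitly.
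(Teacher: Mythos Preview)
Your reduction to $\chi=-2$ and your treatment of the cases $V_7(G)=\emptyset$ and $V_7(G)\neq\emptyset,\ V_6(G)=\emptyset$ are correct and match the paper's proof. The detour through Theorem~\ref{SGZ}(ii) to dispose of $|G|\geq 13$ is harmless but, as you will see, unnecessary.

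The genuine gap is exactly your ``residual case'': when both $V_6(G)$ and $V_7(G)$ are nonempty you do not have a proof, only a hope that discharging or finite classification will work. The paper closes this case with a single clean observation that you overlooked. Pick $u\in V_6(G)$ and $v\in V_7(G)$. If $d_G(u,v)\leq 2$, then by the definition of $b_1$,
\[
b(G)\leq B^{\prime}(G)\leq b_1(G)\leq d_G(u)+d_G(v)-1=6+7-1=12.
\]
If $d_G(u,v)\geq 3$, then $N_G[u]\cap N_G[v]=\emptyset$, so $|G|\geq |N_G[u]|+|N_G[v]|=7+8=15$. Now Corollary~\ref{pet}(ii) applies directly, because
\[
-7\chi=14<15\leq |G|\leq 2|V_{\leq 3}|+|V_4|+\beta_0(\langle V_6,G\rangle)+|G|,
\]
yielding $b(G)\leq B^{\prime}(G)\leq 10$. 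This distance dichotomy eliminates the residual family entirely; no discharging, no Teschner bound, and no restriction to $|G|\leq 12$ is needed.
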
 
Since  $h(K_8) = 2$ and $q(K_8) = 4$ \cite{Ringel}, 
 by Theorem \ref{CaDe} we obtain:
\begin{proposition}  \label{S2N41}
There  exist  graphs embeddable on $\mathbb{N}_4$ with  bondage number $8$. 
There  exist  graphs embeddable on $\mathbb{S}_2$ with  bondage number $8$. 
One such a graph is $K_8 \circ K_1$.
\end{proposition}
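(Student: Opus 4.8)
The plan is to exhibit $K_8 \circ K_1$ as the desired witness and to read off its bondage number directly from Theorem \ref{CaDe}. Recall that $K_8 \circ K_1$ is obtained from $K_8$ by attaching a single pendant vertex to each of the eight vertices of $K_8$. Since every vertex of $K_8$ has degree $7$, we have $\delta(K_8) = 7$, and Theorem \ref{CaDe} gives $b(K_8 \circ K_1) = \delta(K_8) + 1 = 8$ immediately. Thus the only genuine content is to certify that $K_8 \circ K_1$ embeds on each of $\mathbb{N}_4$ and $\mathbb{S}_2$.

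First I would invoke the known genus values $h(K_8) = 2$ and $q(K_8) = 4$, so that $K_8$ itself admits embeddings on $\mathbb{S}_2$ and on $\mathbb{N}_4$. The main step is then to argue that attaching the pendant vertices leaves these embeddings intact on the same surface. Fix any embedding of $K_8$ on the surface in question. For each vertex $v$ of $K_8$, since $d_{K_8}(v) = 7 \geq 1$, the vertex $v$ lies on the boundary of at least one face $f_v$ of the embedding. One may then place the pendant vertex attached to $v$ in the interior of $f_v$ and join it to $v$ by an arc lying inside $f_v$; distinct pendant vertices use disjoint arcs, and if several pendants happen to be routed through the same face they are drawn as pairwise non-crossing arcs emanating from their respective attachment points. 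This produces a crossing-free drawing of $K_8 \circ K_1$ on the same surface, so $K_8 \circ K_1$ embeds on both $\mathbb{S}_2$ and $\mathbb{N}_4$.

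Combining the two observations yields that $K_8 \circ K_1$ is a graph embeddable on $\mathbb{N}_4$ with $b(K_8 \circ K_1) = 8$, and likewise a graph embeddable on $\mathbb{S}_2$ with bondage number $8$, which is exactly the assertion. I do not anticipate a real obstacle: the only point requiring care is the elementary topological fact that adding pendant vertices (more generally, attaching trees) does not raise the orientable or non-orientable genus of a graph, and this follows from the face-incidence argument above. The essential insight is simply that the corona with $K_1$ turns the minimum-degree datum of $H$ into the exact bondage number of $H \circ K_1$ via Theorem \ref{CaDe}, while genus is unaffected by the pendant attachments.
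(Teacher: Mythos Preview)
Your proof is correct and follows essentially the same approach as the paper: invoke $h(K_8)=2$ and $q(K_8)=4$, apply Theorem~\ref{CaDe} to get $b(K_8\circ K_1)=\delta(K_8)+1=8$, and observe that pendant vertices can be added inside faces without raising the genus. The paper treats the last step as obvious and omits it, while you spell it out explicitly.
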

\begin{question}  \label{q4}
Is there a  graph $G$ embeddable in $\mathbb{N}_4$ with  $b(G)  \in \{9,10,11,12\}$?
Is there a  graph $G$ embeddable in $\mathbb{S}_2$ with  $b(G)  \in \{9,10,11,12\}$? 
\end{question}

\subsection{Proofs}
\begin{lemma} \label{pomost}
Let $G$ be a connected graph $2$-cell embedded  in a surface $\mathbb{U}$.
 Let $s \geq 3$, $V_{\leq s} - V_{\leq 2} \not = \emptyset$ and $B^{\prime}(G) \geq 2s-1$. 
 Let $I = \{x_1, x_2, \dots,x_k\}$ be an independent dominating set in 
$\left\langle  V_{\leq s}, G \right\rangle$. Then $V_{\leq s-1} \subseteq I$ and there is a supergraph  $G_k$ for $G$ 
which is $2$-cell embedded  in  $\mathbb{U}$, $V(G_k) = V(G)$, $E(G) \subseteq E(G_k)$  and the following hold: 
\begin{itemize}
\item [(a)]  $I$ is an independent set of $G_k$;
\item [(b)] if $u \in (V(G) - N_G(I)) \cup N_G(V_1)$ then $N_{G_k} (u) = N_G (u)$;              
\item [(c)] if $u \in I$, $v \in V_{\leq s-1}(G)$ and $u \not = v$ then $d_{G_k}(u,v) = d_G(u,v) \geq 3$;  
\item [(d)]  if $u \in I$, $d_G(u) = r \geq 3$  and $v \in N_G(u)$ then $d_{G_k}(v) \geq 2s -r +2$;
\item [(e)]  if $u \in I$, $d_G(u) = 2$  and $v \in N_G(u)$ then $d_{G_k}(v) \geq 2s-1$.
\end{itemize}
\end{lemma}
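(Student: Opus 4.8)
The plan is to determine first which vertices are forced into $I$, then to build $G_k$ by feeding the vertices of $I$ one at a time into Lemma~\ref{2connectednew}, and finally to read off (a)--(e) using the two inequalities $B^{\prime}(G)\le b_1(G)$ and $B^{\prime}(G)\le b_3(G)$ from \eqref{eq:bprime}. Since $b_1(G)\ge B^{\prime}(G)\ge 2s-1$, we have $d_G(x)+d_G(y)\ge 2s$ for every pair $x,y$ with $1\le d_G(x,y)\le 2$. If $w\in V_{\le s-1}$, then $w$ is a vertex of $\langle V_{\le s},G\rangle$, so either $w\in I$ or $w$ has a neighbour $x\in I\subseteq V_{\le s}$; the latter gives $d_G(w)+d_G(x)\le(s-1)+s<2s$, a contradiction, so $V_{\le s-1}\subseteq I$ (in particular $V_1\subseteq I$).

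For the construction I would set $G_0=G$ and, running through $x_1,\dots,x_k$, let $G_j$ arise from $G_{j-1}$ by applying Lemma~\ref{2connectednew} to $x_j$ when $d_G(x_j)\ge 2$, and $G_j=G_{j-1}$ otherwise; this keeps the embedding $2$-cell in $\mathbb{U}$ and makes $\langle N_{G_j}(x_j),G_j\rangle$ connected, and Hamiltonian when $d_G(x_j)\ge 3$. An induction on $j$ shows that every edge of $E(G_j)\setminus E(G)$ joins two vertices of $N_G(x_l)$ for some $l\le j$; as $I$ is independent in $G$, no such edge has an endpoint in $I$, hence $N_{G_{j-1}}(x_j)=N_G(x_j)$ for all $j$, so Lemma~\ref{2connectednew} really is applied to the original neighbourhoods, and $N_{G_k}(u)=N_G(u)$ for every $u\in I$. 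This gives (a) at once, and (b): a vertex outside $N_G(I)$ meets no added edge, and if $u$ has a neighbour $w\in V_1\subseteq I$, then $u\notin I$, and an added edge at $u$ from processing some $x_j\in I$ with $d_G(x_j)\ge 2$ would make $u$ a common neighbour of $w$ and $x_j$, forcing $1+d_G(x_j)=d_G(w)+d_G(x_j)\ge 2s$ with $d_G(x_j)\le s$, impossible.

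Properties (d) and (e) should follow from $b_3(G)\ge 2s-1$. Let $u\in I$ and $v\in N_G(u)$; applying $b_1$ to the edge $uv$ gives $d_G(v)\ge 2s-d_G(u)$. If $d_G(u)=r\ge 3$, processing $u$ attaches a Hamilton cycle to $N_G(u)$, so $v$ gains two distinct cycle-neighbours $v'$, $v''\in N_G(u)$; applying $b_3(G)\ge 2s-1$ to $uv$ either yields $d_G(v)\ge 2s-r+2$ outright, or bounds $|N_G(u)\cap N_G(v)|$ above by $d_G(u)+d_G(v)-2s$, in which case enough of $v'$, $v''$ are new neighbours of $v$ to force $d_{G_k}(v)\ge 2s-r+2$; either way this is (d). For (e), $d_G(u)=2$, and the connectedness of $\langle N_G(u),G_j\rangle$ supplies one new edge between the two neighbours of $u$ unless $b_3(G)\ge 2s-1$ already gives $d_G(v)\ge 2s-1$, so $d_{G_k}(v)\ge 2s-1$ in all cases.

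The step I expect to be the main obstacle is (c). That $d_G(u,v)\ge 3$ is immediate: $u,v\in I$ are non-adjacent, and $d_G(u,v)=2$ would force $d_G(u)+d_G(v)\ge 2s$ although $d_G(u)\le s$ and $d_G(v)\le s-1$. The real content is $d_{G_k}(u,v)=d_G(u,v)$, where, since $G\subseteq G_k$, only the inequality $d_{G_k}(u,v)\ge d_G(u,v)$ remains to be shown. I would prove this by induction on $d_{G_k}(u,v)$: because $u,v\in I$ we have $N_{G_k}(u)=N_G(u)$ and $N_{G_k}(v)=N_G(v)$, so the end-edges of a shortest $G_k$-path $P$ from $v$ to $u$ lie in $G$, an internal vertex of $P$ in $I$ splits $P$ and lets the inductive hypothesis finish, and since $d_G(v)\le s-1$ no neighbour of $v$ is adjacent in $G$ to any $x_l\in I$ other than $v$ (a common neighbour would put $v,x_l$ at distance $2$), so the added edges near the $v$-end of $P$ come only from processing $v$ and, being confined to $N_G(v)$, cannot lie on a shortest path leaving $v$. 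What remains, and is the delicate point, is to rule out that an added edge occurring deeper inside $P$ — which always comes with a length-$2$ detour $p,x_l,q$ through a vertex $x_l\in I$ — shortens a $v$--$u$ geodesic; carrying this rerouting argument through is the crux, while the rest of the lemma reduces to routine estimates with $b_1$ and $b_3$.
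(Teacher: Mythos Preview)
Your approach is essentially the paper's own: derive a ``Claim~1'' from $b_1(G)\ge 2s-1$, build $G_k$ by iterating Lemma~\ref{2connectednew} over the vertices of $I$, and obtain (d), (e) from $b_3(G)\ge 2s-1$ by a case analysis on $|N_G(u)\cap N_G(v)|$ (the paper phrases this as the two cases $d_G(v)=2s-r$ and $d_G(v)=2s-r+1$; your formulation via the Hamilton cycle and the bound $|N_G(u)\cap N_G(v)|\le d_G(u)+d_G(v)-2s$ is equivalent). Your treatment of (a) and (b), and of the inductive claim $N_{G_{j-1}}(x_j)=N_G(x_j)$, is in fact more explicit than the paper's.

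Where you diverge is (c). The paper simply asserts that (a)--(c) ``immediately follow by the very definition of the graph $G_k$ and by Claim~1''; it offers no rerouting argument at all. You are right that the \emph{full} equality $d_{G_k}(u,v)=d_G(u,v)$ is not obviously a consequence of those two ingredients: an added edge $pq$ inside $N_G(x_l)$ can in principle shorten a long $u$--$v$ geodesic, and replacing $pq$ by $p,x_l,q$ only gives a longer walk, not a shorter one. So the concern you raise is legitimate, and the paper does not address it.

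That said, you are making (c) harder than it needs to be for the purposes of this paper. The part of (c) that is actually used downstream (in Claim~2 of the proof of Theorem~\ref{bBprim} and in Corollary~\ref{S1N2}) is only $d_{G_k}(u,v)\ge 3$, and this you have already proved: since $u,v\in I$ you have $N_{G_k}(u)=N_G(u)$ and $N_{G_k}(v)=N_G(v)$, so $u,v$ are non-adjacent in $G_k$ and a common $G_k$-neighbour would be a common $G$-neighbour, contradicting $d_G(u,v)\ge 3$. The ``deeper'' rerouting you flag as the crux is therefore not needed; drop it, record $d_{G_k}(u,v)\ge 3$, and your write-up is complete and matches the paper's level of detail.
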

\begin{proof}
Since $B^{\prime}(G) \geq 2s-1$,  the following claim is valid.

{\bf Claim 1.}  If $x \in V_{r}(G)$, $r \leq s$, $y \in V(G)$ and 
$1 \leq d_G(x,y) \leq 2$, then $d_G(y) \geq 2s - r$. 

 Hence $V_{\leq s-1} \subseteq I$ and $d_G(x,y) \geq 3$ whenever $x \not = y$, 
$x \in V_{\leq s}$ and $y \in V_{\leq s-1}$. 
Since $G$ is $2$-cell embedded, using Lemma \ref{2connectednew} 
 consecutively $k$ times we obtain the graphs $G_0 = G, G_1,\dots,G_k$, 
 as follows.
 For $r = 1,2,\dots,k$ let $G_r = G_{r-1} + F_r$,
 where $F_r \subseteq \{xy \mid  x,y \in N_G(x_r), x \not = y, xy \not \in E(G_{r-1})\}$, 
such that  $G_r$ is still a $2$-cell embedded  in $\mathbb{U}$ and 
 (i) if $d_G(x_r) \geq 3$ then $\left\langle N_{G_r} (x_r), G_r\right\rangle$ is hamiltonian, and 
(ii)  if $d_G(x_r)  = 2$ then   $x_r$ belong to a triangle of $G_r$. 
Clearly, if $d_G(x_r) \geq 3$ then $\left\langle N_{G_k} (x_r), G_k\right\rangle$ is hamiltonian  and 
 if $d_G(x_r) = 2$ then   $x_r$ belongs to a triangle of $G_k$, $r = 1,2,\dots,k$.

(a)--(c): The results immediately follow by the very definition 
of the graph $G_k$ and by Claim 1.

(d): By Claim 1, $d_G(v) \geq 2s - r$. 
If the equality holds then $N_G(u) \cap N_G(v)$ is empty. 
Since $|N_{G_k}(u) \cap N_{G_k}(v)| \geq 2$, $d_{G_k}(v) \geq 2s-r+2$.
If $d_G(v)= 2s-r+1$ then $|N_{G}(u) \cap N_{G}(v)|=1$.  
Since $|N_{G_k}(u) \cap N_{G_k}(v)| \geq 2$,  $d_{G_k}(v) \geq 2s-r+2$.

(e): By Claim 1, $d_G(v) \geq 2s - 2$. 
If the equality holds then $N_G(u) \cap N_G(v)$ is empty. 
Since $|N_{G_k}(u) \cap N_{G_k}(v)| = 1$, $d_{G_k}(v) \geq 2s-1$.
\end{proof}

\begin{proof}[Proof of Theorem  \ref{bBprim}] 
 Let $G$ be a connected graph $2$-cell embedded  in a surface $\mathbb{M}$ with $\chi (\mathbb{M}) = \chi$.  
Suppose $B^{\prime}(G) \geq 2s-1$. 
Keeping the notation of Lemma \ref{pomost} let us consider the graph 
 $H = G_k - V_{\leq s-1}$. By Clam 1 and Lemma \ref{pomost} we immediately have: 

{\bf Claim 2.}\label{cl2} Let $I_s = I - V_{\leq s-1}$. 
\begin{itemize}
\item [(a)]  $\delta (H) = s$, $I_s = V_s(H)$ and $I_s$ is an independent set of $H$.
\item [(b)] If $u \in V(H) - N_G[I]$ then $N_H(u) = N_G(u)$ and $d_H(u) \geq s+1$. 
\item [(c)]  If $v \in N_G(V_{\leq 2})$ then $d_H (v) \geq 2s-2$.
\item [(d)]  If $v \in N_G(V_l)$, $3 \leq l \leq s-1$, then   $d_{H}(v) \geq 2s -l +1$.
\item [(e)]  If $v \in N_G(I_s)$, then $d_{H}(v) \geq s+2$.
\end{itemize} 

By Lemma \ref{edge} and Claim 2 it follows that 
\begin{eqnarray}
\nonumber
6(|H| - \chi) &\geq & 2\|H\|  =  \displaystyle{\sum_{v \in I_s}d_{H}(v) + \sum _{u \in N_{G}(I)}d_{H}(u) + 
  \sum_{t\in V(H)-N_{G}[I]}\hspace{-.8cm}d_{H}(t)} \hfill \\
\nonumber & \geq & \displaystyle{s|I_s|} + ((s+2)|N_G(I_s)| + \displaystyle{|V_{\leq 2}|(2s-2) + \sum_{j=3}^{s-1}(2s-j+1)|V_j|)} \hfill\\
\nonumber & \  & + \displaystyle{(s+1)(|H| - |I_s| - |N_{H}(I_s)| - |N_G(V_{\leq s-1})|)}
\end{eqnarray}
or equivalently 
\begin{eqnarray}\label{qwaa}
 \ & & -6\chi  \geq 
 - |I_s| + |N_{H}(I_s)| + (s-5)|H| + |V_{\leq 2}|(s-3) + \sum_{j=3}^{s-1}(s-j)|V_j|.
\end{eqnarray}

Let us consider the bipartite graph $R$ with parts $I_s$ and $N_H(I_s)$, 
and edge set  $\{uv \in E(G)\mid u \in I_s, v \in N_G(I_s)\}$. 
First let $R$ have a cycle. 
Lemma \ref{edge} implies  $s|I_s| = \|R\| \leq 2(|R| - \chi)$. 
Since $|R| = |I_s| + |N_H(I_s)|$, we obtain 
\begin{equation} \label{nis1}
|N_H(I_s)| \geq \frac{s-2}{2}|I_s| +  \chi
\end{equation}
If $R$ is a forest then $s|I_s| = \|R\| \leq |R| -1 = |I_s| + |N_H(I_s)| -1$. 
Hence $|N_H(I_s)| \geq (s-1)|I_s| +1 \geq \frac{s-2}{2}|I_s| + \chi$. 

By \eqref{qwaa} and \eqref{nis1} it follows
\begin{equation}\label{eq:qwert1}
 -14\chi  \geq (s-4)|I_s| + 2(s-5)|H| + 2|V_{\leq 2}|(s-3) + 2\sum_{j=3}^{s-1}(s-j)|V_j|.
\end{equation}
Since $|H| = |G| - |V_{\leq s-1}|$, we finally obtain 
\begin{equation}\label{eq:qwert11}
 -14\chi  \geq (s-4)|I_s| + 2(s-5)|G| + 4|V_{\leq 2}| + 2\sum_{j=3}^{s-1}(5-j)|V_j|,
\end{equation}
 a contradiction. 
\end{proof}

\begin{proof}[Proof of Theorem \ref{deltamax}] 
(a) Since $\chi (\mathbb{M})$ is as large as possible, G has 2-cell embedding on $\mathbb{M}$. 
 Since $G$ has no vertex of degree $s = \delta_{max}^{\mathbb{M}}$, $V_{\leq s-1}$ is not empty.  
Suppose to the contrary that $B^{\prime}(G) \geq 2s -2$.  
 Hence, for any two distinct vertices $x, y \in V_{\leq s-1} = \{x_1,\dots,x_k\}$, $d_G (x,y) \geq 3$. 
 Now, as in the proof of Lemma \ref{pomost}, we obtain a supergraph $G_k$ for $G$ 
 with $V(G) = V(G_k)$ and $xy \in E(G_k) - E(G)$ implies 
 both $x$ and $y$ are in $N_G(u)$ for some $u \in V_{\leq s} (G)$.
 Moreover, 
   if $d_G(x_r) \geq 3$ then $\left\langle N_{G_k}(x_r), G_k \right\rangle$  is hamiltonian, and 
  if $d_G(x_r) = 2$ then  $x_r$ belongs to a triangle of $G_k$, $r = 1,2,\dots,k$.

{\bf Claim 3.}\label{cl3}

(i) If $u \in V_r(G)$, $3 \leq r \leq s-1$ and $v \in N_G(u)$ then $d_{G_k}(v) \geq 2s-r+1$.

(ii) If $u \in V_{\leq 2}(G)$ and $v \in N_G(u)$ then $d_{G_k}(v) \geq 2s-2$.

\begin{proof}[Proof of Claim 3] 
(i): Since $B^{\prime}(G) \geq 2s -2$, $d_G(v) \geq 2s - r -1$.  
 If the equality holds then $N_G(u) \cap N_G(v)$ is empty. 
Since $|N_{G_k}(u) \cap N_{G_k}(v)| \geq 2$, $d_{G_k}(v) \geq 2s-r+1$. 
If $d_G(v)= 2s-r$ then $|N_{G}(u) \cap N_{G}(v)|=1$. 
 Since $|N_{G_k}(u) \cap N_{G_k}(v)| \geq 2$,  $d_{G_k}(v) \geq 2s-r+1$.

(ii): Since $B^{\prime}(G) \geq 2s -2$, $d_G(v) \geq 2s - d_G(u) -1$.  
 If $d_G (u) = 2$ and the equality holds then $N_G(u) \cap N_G(v)$ is empty. 
Since $|N_{G_k}(u) \cap N_{G_k}(v)| = 1$,  $d_{G_k}(v) \geq 2s-2$.
\end{proof}
Consider the graph  $H = G_k - V_{\leq s}(G)$ which is embedded in $\mathbb{M}$. 
Since $s \geq 5$, by Claim 3 it follows $\delta (H) \geq s+1$ - a contradiction.

(b) The result immediately follows by (a) and Lemma \ref{fivedegree}. 
\end{proof}

\begin{proof}[Proof of Corollary \ref{S1N2}] 
If $\delta (G) \geq 6$ then $G$ is a 6-regular triangulation 
as it follows by the Euler formula; hence $B^{\prime}(G)=9$. 
If $V_5(G)$ is not empty then $B^{\prime}(G) \leq 8$ by Corollary \ref{pet}.
So, let $V_{\leq 4} (G) \not = \emptyset$ and $V_5(G) = \emptyset$. 
Suppose  $B^{\prime}(G) \geq 9$. 
Note that if $x \in V_{r}(G)$, $r \leq 4$, $y \in V(G)$ and 
$1 \leq d_G(x,y) \leq 2$, then $d_G(y) \geq 10 - r$. 
 Hence $V_3(G) \cup V_4(G) \not = \emptyset$ - 
 otherwise each component of the graph $G-V_{\leq 2}(G)$
 is a  graph with minimum degree at least $6$ and maximum degree at least $7$, 
 contradicting Lemma \ref{edge}.  
 Consider the supergraph $G_k$ of $G$ described  in Lemma \ref{pomost}, provided $s=5$.  
Then  Lemma \ref{pomost} implies  the graph $H = G_k - V_{\leq 4}$ has minimum degree at least $6$ 
and maximum degree at least $7$ -  
 again a contradiction with Lemma \ref{edge}. 
\end{proof}

\begin{proof}[Proof of Proposition \ref{3d2}] 
By Theorem \ref{samczech} and Theorem \ref{d2} it follows that 
$\mu (G) \leq \Delta (G) + 3$. Since $G$ is $2$-cell embedded, 
$\Delta (G) \geq 3$. 

(i) Since $\mu (G) > \frac{3}{2}\Delta (G)$, $\Delta (G) \leq 5$. 
Assume $\delta (G) \leq 3$. But then $b_2 (G) \geq \mu (G)$ implies that  
$G$ is $3$-regular. 

(ii) Since $\mu (G) = \frac{3}{2}\Delta (G)$, $\Delta (G) \in \{4,6\}$. 
 If $\Delta (G) = 6$ then $b_2 (G) \geq \mu (G) = 9 = \Delta (G) + 3$. 
 By Theorem \ref{samczech}, 
$G$ is $6$-regular and no edge of $G$ belongs to at least $3$ triangles. 
 So, let $\Delta (G) =4$. Then $\mu (G) =6$ which leads to $\delta (G) \geq 3$. 
\end{proof}

\begin{proof}[Proof of Corollary \ref{N3}] 
If $G$ is embeddable in a surface with non-negative Euler characteristic then 
the result follows by Corollary \ref{S0N1} and Corollary \ref{S1N2}. 
 So, we may assume that the non-orientable genus of $G$ is $3$ and hence $|G| \geq 7$. 
By Lemma \ref{edge}, $\|G\| \leq 3|G| + 3$. 
Hence $\delta_{max}^{\mathbb{N}_3} = 6$. 
If $G$ has no degree $6$ vertices then $B^{\prime}(G) \leq 9$ because of Theorem \ref{deltamax}. 
Assume $V_6$ is not empty. 
But then 
$ 7 < \beta_0 (\left\langle V_6, G\right\rangle) + |G|$. 
Now by Corollary \ref{pet}(ii),  $b(G)\leq B^{\prime}(G) \leq 10$. 
The rest immediately follows by Corollary \ref{pet}(i). 
\end{proof}

\begin{proof}[Proof of Corollary \ref{S2N4}]
If $G$ is embeddable in a surface with  Euler characteristic not less than $-1$ then 
the result follows by Corollary \ref{S0N1}, Corollary \ref{S1N2} and Corollary \ref{N3}. 
 So, we may assume that  at least one of $q(G) = 4$ and  $h(G) =2$ holds.   
By Lemma \ref{edge}, $\|G\| \leq 3|G| + 6$. 
Hence $\delta_{max}^{\mathbb{M}} \leq 7$. 
 Since $h(K_8)=2$ and $q(K_8)=4$,  $\delta_{max}^{\mathbb{M}}=7$. 
If $G$ has no degree $7$ vertices then $b(G) \leq B^{\prime}(G) \leq 11$
 because of Theorem \ref{deltamax}. 
Assume $V_7$ is not empty. 
If $V_6$ is empty then Corollary \ref{pet}(iii) implies 
$b(G) \leq B^{\prime}(G) \leq 12$. So, let $V_6 \not = \emptyset$. 
 If there are $u \in V_6$ and $v \in V_7$ which are at distance at most $2$
  then $b(G) \leq B^{\prime}(G) \leq b_1(G) \leq 6+7-1 = 12$. 
  If  $u \in V_6$, $v \in V_7$ and $d_G(u,v) \geq 3$ then $|G| \geq 15$.  
  By Corollary \ref{pet}(ii), $b(G) \leq B^{\prime}(G) \leq 10$. 
\end{proof}

\section{Upper bounds: the domination number}
In this section we present upper bounds for the order and bondage number of a graph 
in terms of the domination number and Euler characteristic. 
To do this we need the following results.
\begin{them} \label{san} {\rm (Sanchis~\cite{sanchis})}
Let $G$ be a connected graph with $n$ vertices, domination number $\gamma$ where
$3 \leq \gamma \leq  n/2$. Then the number of edges of $G$ is at most
$(n-\gamma +1)(n-\gamma)/2$. If $G$ has exactly this number of edges and $\gamma \geq 4$ 
it must be of the following form.
\begin{itemize}
\item[$(P_1)$]  An $(n - \gamma)$-clique, together with an independent set of size $\gamma$, 
such that each of the vertices in the $(n - \gamma)$-clique is adjacent to exactly one of 
the vertices in the independent set, and such that each of these $\gamma$ vertices
 has at least one vertex adjacent to it.
 \item[$(P_2)$]  For $\gamma = 3$, $G$ may consist of a clique of $n-5$ vertices, 
 together with $5$ vertices $x_1, x_2, x_3, x_4, x_5$, with edges $x_1x_3$, $x_2x_4$, $x_2x_5$, 
  such that every vertex in the $(n-5)$-clique is adjacent to 
  $x_4$ and $x_5$, and in addition adjacent to either $x_l$ or $x_3$. 
Moreover, at least one of these vertices is adjacent to $x_l$ and at least one to $x_3$. 
\end{itemize}
\end{them}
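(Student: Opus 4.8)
The plan is to treat the edge bound and the extremal characterisation separately. Fix a \emph{minimum} dominating set $D=\{d_1,\dots,d_\gamma\}$ and put $C=V(G)\setminus D$, so that $|C|=n-\gamma\ge\gamma\ge 3$; this is where the hypothesis $\gamma\le n/2$ enters. Since a minimum dominating set is minimal, each $d_i$ has a private neighbour, and since $D$ dominates, every $c\in C$ has at least one neighbour in $D$; fix one such neighbour $f(c)\in D$ and set $C_i=f^{-1}(d_i)$. Writing $e(C,D)$ for the number of edges joining $C$ and $D$, we have $\|G\|=\|\langle C,G\rangle\|+e(C,D)+\|\langle D,G\rangle\|$ with $\|\langle C,G\rangle\|\le\binom{n-\gamma}{2}$ and $e(C,D)\ge|C|=n-\gamma$, so the target bound $\|G\|\le\binom{n-\gamma+1}{2}=\binom{n-\gamma}{2}+(n-\gamma)$ is equivalent to the \emph{trade-off inequality}
\[
 \|\langle D,G\rangle\|+\bigl(e(C,D)-(n-\gamma)\bigr)\ \le\ \binom{n-\gamma}{2}-\|\langle C,G\rangle\|,
\]
that is: every ``surplus'' edge meeting $D$ — an edge inside $D$, or a second edge from some $c\in C$ to $D$ — is paid for by a missing edge inside $C$. (Vizing's classical bound $\|G\|\le\lfloor(n-\gamma)(n-\gamma+2)/2\rfloor$ is the same statement with the weaker ``budget'' $n-\gamma+1$ in place of $n-\gamma$, and can serve as a sanity check.)

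The heart of the proof is to establish this trade-off by an injective, discharging-style assignment of surplus edges to non-edges of $\langle C,G\rangle$. The mechanism is that an \emph{unpaid} surplus edge lets one re-route domination and beat $\gamma$: if $d_id_j\in E(G)$ one tries to delete $d_j$ and re-attach its private neighbours to $d_i$ or to some $C_k$; if some $c$ has two neighbours $d_i,d_j\in D$ one tries to merge $C_i$ into the part dominated through $d_j$; performing such moves whenever $\langle C,G\rangle$ is ``too dense'' produces a dominating set of size $\gamma-1$, a contradiction. I expect the main obstacle to be exactly this bookkeeping: one must choose $D$ carefully — say, among all minimum dominating sets, one minimising $\|\langle D,G\rangle\|$ and then minimising $e(C,D)$ — so that no single move is available, and then argue that the absence of \emph{any} such move forces the counting inequality, handling several surplus edges simultaneously without charging two of them to the same non-edge of $C$. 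A clean description of the equality case of this lemma (which non-edges get charged, and what forces equality) should be recorded along the way, since it is what drives the structure theorem.

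Granting the bound together with its equality analysis, the extremal graphs follow. For $\gamma\ge 4$, equality $\|G\|=\binom{n-\gamma+1}{2}$ forces $\langle C,G\rangle=K_{n-\gamma}$, forces $\|\langle D,G\rangle\|=0$ (so $D$ is independent), and forces $e(C,D)=n-\gamma$ (so every $c\in C$ has exactly one neighbour in $D$); a single vertex of $C$ then dominates all of $C$ together with exactly one $d_i$, so to keep $\gamma(G)=\gamma$ the remaining $\gamma-1$ vertices of $D$ must still be undominated and hence need $\gamma-1$ further dominators, and connectedness forces each $d_i$ to have at least one neighbour — which is precisely the description $(P_1)$. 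The case $\gamma=3$ is genuinely special: then $|D|=3$ is too small for the clique argument to bite and $\langle C,G\rangle$ need not be complete, so I would instead run a direct finite case analysis on the adjacency type of a three-element minimum dominating set $\{d_1,d_2,d_3\}$ (how many of the three edges $d_id_j$ are present, and how $C$ splits among them), count edges in each case, and verify that the maximum $\binom{n-2}{2}$ is attained only by the configuration $(P_2)$ — the near-clique on $n-5$ vertices glued to the five-vertex gadget $x_1,\dots,x_5$. I anticipate this $\gamma=3$ enumeration to be the most tedious, though not the most conceptual, part of the argument.
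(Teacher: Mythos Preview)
The paper does not prove this statement at all: Theorem~\ref{san} is quoted verbatim from Sanchis~\cite{sanchis} as a preliminary tool (together with Ore's theorem) and is used only through the inequality $\|G\|\le\binom{n-\gamma+1}{2}$ in the proof of Proposition~\ref{upper}. There is therefore no ``paper's own proof'' to compare your proposal against.

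That said, a word about the proposal itself. What you have written is a plan, not a proof: you identify the natural decomposition $\|G\|=\|\langle C,G\rangle\|+e(C,D)+\|\langle D,G\rangle\|$ and the correct target inequality, but the central step --- the injective charging of surplus edges meeting $D$ to non-edges of $\langle C,G\rangle$, enforced by ``otherwise we could shrink $D$'' --- is explicitly deferred (``I expect the main obstacle to be exactly this bookkeeping''). This is indeed the heart of Sanchis's argument, and it is not short: the re-routing moves you describe do not act independently, so one cannot simply sum local observations, and the original paper handles this by an induction on $n$ with a careful choice of which vertex to delete. Your equality analysis for $\gamma\ge 4$ is on the right track, and your instinct that $\gamma=3$ requires a separate finite case split is correct, but as written the proposal does not yet constitute a proof of either the bound or the characterisation.
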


\begin{them} \label{orethm} {\rm (Ore~\cite{ore})}
If $G$ is a connected graph with $n \geq 2$ vertices then $\gamma (G) \leq n/2$. 
\end{them}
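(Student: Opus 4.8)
The plan is to deduce the bound from the classical observation that, in a graph with no isolated vertices, the complement of a minimal dominating set is again a dominating set; this is, in fact, essentially how Ore obtained the inequality. Note first that, since $G$ is connected and $n = |G| \geq 2$, no vertex of $G$ is isolated, so this observation will be applicable.

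First I would fix a \emph{minimal} dominating set $D$ of $G$. Such a set exists: $V(G)$ is dominating, and one may repeatedly delete a vertex from a dominating set as long as the remaining set still dominates $G$. The key step is to show that $W := V(G) \setminus D$ is also a dominating set of $G$, which amounts to showing that every vertex $v \in D$ has a neighbour in $W$. So fix $v \in D$. By minimality of $D$, the set $D \setminus \{v\}$ is not dominating, hence there is a vertex $u$ with $u \notin N_G[D \setminus \{v\}]$. If $u \neq v$, then $u \notin D$ (otherwise $u \in D \setminus \{v\}$ would dominate itself), so $u \in W$; moreover $u$ is dominated by $D$ but not by $D \setminus \{v\}$, forcing the dominating neighbour of $u$ in $D$ to be $v$, so $v$ has a neighbour in $W$. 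If instead $u = v$, then $v$ has no neighbour in $D$; as $v$ is not isolated it has some neighbour, which then lies in $W$. In either case $v$ has a neighbour in $W$, so $W$ dominates $G$.

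Finally, $D$ and $W = V(G) \setminus D$ partition $V(G)$, hence $\min\{|D|,|W|\} \leq n/2$; since both $D$ and $W$ are dominating sets of $G$, this yields $\gamma(G) \leq n/2$.

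The whole argument is short, and the only place that needs any care is the case analysis in the key step — specifically ruling out that the vertex $u$ missed by $D \setminus \{v\}$ lies in $D$, and invoking the absence of isolated vertices when $u = v$. I do not anticipate a genuine obstacle.
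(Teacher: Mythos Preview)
Your argument is correct and is indeed essentially Ore's original proof. Note, however, that the paper does not give its own proof of this statement: it is quoted as a known result from Ore~\cite{ore} and used as a black box in the proof of Proposition~\ref{upper}. So there is nothing in the paper to compare your approach against; you have simply supplied the classical proof that the paper omits.
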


\begin{proposition} \label{upper}
Let $G$ be a connected graph of order $n\geq 2$ which is $2$-cell embedded  in a surface $\mathbb{M}$. 
\begin{itemize}
\item[(i)] If $\gamma (G) =2$ then  $n \geq 2 + \sqrt{6-2\chi(\mathbb{M})}$ when 
 $n$ is even and $n \geq 2 + \sqrt{7-2\chi(\mathbb{M})}$ when $n$ is odd.
\item[(ii)] If $\gamma (G) = \gamma \not = 2$ then 
\begin{equation} \label{nga}
n \geq \gamma + (1 + \sqrt{9+8\gamma-8\chi(\mathbb{M})})/2, \  \mbox{and}
\end{equation}
\begin{equation} \label{gan}
\gamma \leq n + (1 - \sqrt{8n + 9 - 8\chi(\mathbb{M})})/2.
\end{equation}
\end{itemize}
\end{proposition}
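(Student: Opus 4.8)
The plan is to sandwich the number of edges $\|G\|$ between a lower bound coming from Euler's formula and an upper bound governed by the domination number, and then to read off $n$ (and $\gamma$) from a single quadratic inequality. Since $G$ is $2$-cell embedded in $\mathbb{M}$, equality holds in \eqref{eq:euler}, so $\|G\| = n + f(G) - \chi(\mathbb{M})$, and as $f(G) \geq 1$ this gives
\[
\|G\| \geq n - \chi(\mathbb{M}) + 1 .
\]
For the matching upper bound I would split on the value of $\gamma = \gamma(G)$. If $\gamma = 1$ then $\|G\| \leq \binom{n}{2} = (n-\gamma+1)(n-\gamma)/2$ trivially. If $\gamma \geq 3$, then $\gamma \leq n/2$ by Ore's Theorem~\ref{orethm}, so Sanchis' Theorem~\ref{san} applies and yields $\|G\| \leq (n-\gamma+1)(n-\gamma)/2$; hence for every $\gamma \neq 2$ one has the single bound $\|G\| \leq (n-\gamma+1)(n-\gamma)/2$. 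If $\gamma = 2$, then $G$ has no universal vertex, so $\delta(\overline{G}) \geq 1$, whence $\|\overline{G}\| \geq \lceil n/2 \rceil$ and $\|G\| \leq \binom{n}{2} - \lceil n/2 \rceil = \lfloor n(n-2)/2 \rfloor$.

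For part~(ii), combining the two bounds gives $(n-\gamma+1)(n-\gamma) \geq 2n - 2\chi(\mathbb{M}) + 2$ for $\gamma \neq 2$. Putting $t = n - \gamma$ (so $t \geq 1$, since $n \geq 2$, $\gamma \neq 2$, and $\gamma \leq n/2$ when $\gamma \geq 3$), this reads $t^2 + t - 2n + 2\chi(\mathbb{M}) - 2 \geq 0$; its smaller root is nonpositive while $t > 0$, so $t$ is at least the larger root, which is exactly \eqref{gan}. Substituting $n = t + \gamma$ into the same inequality instead gives $t^2 - t - 2\gamma + 2\chi(\mathbb{M}) - 2 \geq 0$, and the same root analysis produces \eqref{nga}; thus \eqref{nga} and \eqref{gan} are two readings of the one inequality $2(n-\chi(\mathbb{M})+1) \leq (n-\gamma+1)(n-\gamma)$. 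For part~(i) I would combine $\|G\| \geq n - \chi(\mathbb{M}) + 1$ with $\|G\| \leq \lfloor n(n-2)/2 \rfloor$ and separate parities: for $n$ even this is $n^2 - 4n + 2\chi(\mathbb{M}) - 2 \geq 0$ and for $n$ odd it is $n^2 - 4n + 2\chi(\mathbb{M}) - 3 \geq 0$, and solving each for $n$ (keeping the larger root) gives $n \geq 2 + \sqrt{6 - 2\chi(\mathbb{M})}$ and $n \geq 2 + \sqrt{7 - 2\chi(\mathbb{M})}$ respectively.

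This is essentially an assembly of known estimates, so there is no genuinely hard step; what requires care is the bookkeeping. On the combinatorial side one must check the hypotheses of the quoted edge bounds — invoking Ore's theorem so that Sanchis' theorem is applicable when $\gamma \geq 3$, and carrying out the short complement argument that pins down $\max\|G\|$ together with its parity-dependent floor when $\gamma = 2$. On the algebraic side one must verify in each quadratic that it is the larger root that is forced: this uses $n \geq 2$ (and, for $\gamma = 2$, the elementary fact that $\gamma(G) = 2$ already entails $n \geq 4$, since no connected graph on at most three vertices has domination number $2$), which places $n$ strictly above the smaller root, so that the derived inequality pushes $n$ up to the larger one. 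If one wanted a fully self-contained treatment, the only ingredient not immediate from the results quoted above is the maximal edge count of a graph with domination number $2$, and that is precisely the two-line complement argument used here.
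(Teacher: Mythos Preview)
Your proof is correct and follows essentially the same approach as the paper: bound $\|G\|$ below via Euler's formula (using $f(G)\ge 1$) and above by Sanchis' theorem (for $\gamma\ge 3$, with Ore's theorem checking the hypothesis) or by the complement argument (for $\gamma=2$), then solve the resulting quadratic. The only cosmetic differences are that the paper cites Sumner--Blitch for the sharp edge count when $\gamma=2$ whereas you supply the two-line complement argument directly, and the paper solves the quadratics in $n$ and $\gamma$ separately while you use the substitution $t=n-\gamma$; neither changes the substance.
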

\begin{proof}[Proof of Proposition \ref{upper}]
Since $f(G) \geq 1$, Euler's formula implies  $n - \|G\| + 1 \leq \chi(\mathbb{M})$.

(i) If $H$ is a graph with $\gamma (H) =2$, $|H| = n$  and  maximum number of edges 
then its complement is a forest in which each component is a star \cite{sb}.
This implies $n(n-1)/2 - \left\lceil n/2 \right\rceil = \|H\| \geq \|G\|$.  
 Hence $n - n(n-1)/2 + \left\lceil n/2 \right\rceil + 1 \leq \chi(\mathbb{M})$. 
      Equivalently, $n^2 - 4n + 2\chi(\mathbb{M}) - 2 \geq 0$ when $n$ is even and 
      $n^2 - 4n + 2\chi(\mathbb{M}) - 3 \geq 0$ when $n$ is odd. 
      Since $n \geq 2$, the result easily follows. 

(ii) Since $\|G\| \leq (n-\gamma +1)(n-\gamma)/2$ (by Theorem \ref{san} when $\gamma \geq 3$), 
we have $2\chi(\mathbb{M}) \geq 2n - (n-\gamma +1)(n-\gamma) +2$, or equivalently
\[
n^2 - (2\gamma+1)n + \gamma^2 - \gamma - 2 + 2\chi(\mathbb{M}) \geq 0 \ \mbox{and} 
\]
\[
\gamma^2 - (2n+1)\gamma + n^2 - n - 2 + 2\chi(\mathbb{M}) \geq 0. 
\]
Solving these inequalities we respectively obtain 
 \eqref{nga} and \eqref{gan}, because 
$n \geq 2\gamma$ (by Theorem \ref{orethm}). 
\end{proof}

Next we show that the bounds in Proposition \ref{upper}(ii) are tight. 
Let a graph $G$ have property $(P_1)$(Theorem \ref{san})
and in addition $\delta (G) \geq 4$, $|G| = n = \gamma +i+4t$, where 
 $t \geq \gamma = \gamma (G) \geq 4$,  
 $i = 1$ when $\gamma$ is odd, and  $i = 2$ when $\gamma$ is even. 
 If $p = (\|G\| - |G| +1)/2$ then $p = 4t^2+t + (1-\gamma)/2$ when $\gamma$ is odd, 
 and $p = 4t^2 +3t +1 - \gamma/2$ when $\gamma$ is even. 
  Since $G$ is clearly $4$-edge connected, 
  $G$ can be embedded in $\mathbb{M} = \mathbb{S}_p$(e.g. see Jungerman \cite{Jung}). 
  Note also that $G$  can be $2$-cell embedded in $\mathbb{N}_{2p}$(see \cite{Ringel}).   
    It is easy to see that, in both cases, we have equalities in \eqref{nga} and  \eqref{gan}.

 Combining Theorem\ref{SGZ}(i) and Proposition \ref{upper} we immediately
  obtain the following results on the average degree of a graph.
\begin{corollary} \label{aver}
Let $G$ be a connected graph  $2$-cell embedded  in a surface $\mathbb{M}$ 
 with  $\chi(\mathbb{M}) = \chi \leq -1$. 
  \begin{itemize}
   \item[(i)] Then $ad(G)\leq 6 - 12\chi/(3+\sqrt{17-8\chi})$. 
   \item[(ii)] If  $\gamma (G) = 2$ then $ad(G)\leq 6 - 6\chi/(2+\sqrt{6-2\chi})$ 
   when $|G|$ is even, and $ad(G)\leq 6 - 6\chi/(2+\sqrt{7-2\chi})$ when $|G|$ is odd. 
    \item[(iii)] If  $\gamma (G) = \gamma \geq 3$ and $g(G) = g$ then 
\[
ad (G) \leq \frac{2g}{g-2}(1 - \frac{2\chi}{2\gamma + 1 + \sqrt{9 + 8\gamma - 8\chi}})
\]
\[
\leq 6 - \frac{12\chi}{2\gamma + 1 + \sqrt{9 + 8\gamma - 8\chi}}
\leq 6 - \frac{12\chi}{7 + \sqrt{33 - 8\chi}}.
\]
    \end{itemize}
\end{corollary}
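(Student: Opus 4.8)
The plan is to derive each of the three bounds by feeding a lower bound on $|G|$ — obtained either by an elementary edge count or from Proposition~\ref{upper} — into Theorem~\ref{SGZ}(i), after one short preliminary reduction. First I would record the reduction. Since $G$ is connected and $2$-cell embedded in $\mathbb{M}$ with $\chi(\mathbb{M})=\chi\le -1$, Euler's formula gives $|G|-\|G\|+f(G)=\chi$ with $f(G)\ge 1$, so $\|G\|\ge |G|-\chi+1>|G|-1$; hence $G$ is not a tree, it contains a cycle, and $g:=g(G)$ is finite with $g\ge 3$, so $\tfrac{2g}{g-2}\le 6$. If $\chi_0$ denotes the largest Euler characteristic of a surface carrying $G$, then $\chi_0\ge\chi$, and Theorem~\ref{SGZ}(i) applied to that surface, together with $-\chi_0/|G|\le -\chi/|G|$, yields $ad(G)\le \tfrac{2g}{g-2}\bigl(1-\tfrac{\chi}{|G|}\bigr)$. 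Since $-\chi>0$, the quantity $-\chi/|G|$ is non-increasing in $|G|$, so any inequality $|G|\ge m$ upgrades this to $ad(G)\le \tfrac{2g}{g-2}\bigl(1-\tfrac{\chi}{m}\bigr)\le 6-\tfrac{6\chi}{m}$.

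For part (i) I would combine $|G|(|G|-1)/2\ge \|G\|\ge |G|-\chi+1$ to obtain $|G|^2-3|G|+2\chi-2\ge 0$, hence $|G|\ge (3+\sqrt{17-8\chi})/2$; substituting $m=(3+\sqrt{17-8\chi})/2$ into the inequality above gives $ad(G)\le 6-12\chi/(3+\sqrt{17-8\chi})$. For part (ii), Proposition~\ref{upper}(i) directly supplies $m=2+\sqrt{6-2\chi}$ when $|G|$ is even and $m=2+\sqrt{7-2\chi}$ when $|G|$ is odd, and substitution finishes both cases.

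For part (iii), since $\gamma\ge 3\ne 2$, Proposition~\ref{upper}(ii) gives $|G|\ge \gamma+(1+\sqrt{9+8\gamma-8\chi})/2=(2\gamma+1+\sqrt{9+8\gamma-8\chi})/2$; plugging this into $ad(G)\le \tfrac{2g}{g-2}\bigl(1-\tfrac{\chi}{|G|}\bigr)$ yields the first displayed inequality. The second inequality follows from $\tfrac{2g}{g-2}\le 6$ and the positivity of the bracket. For the third, I would note that $\gamma\mapsto 2\gamma+1+\sqrt{9+8\gamma-8\chi}$ is increasing, so at $\gamma\ge 3$ it is at least $7+\sqrt{33-8\chi}$; since $-12\chi>0$, this gives $-12\chi/(2\gamma+1+\sqrt{9+8\gamma-8\chi})\le -12\chi/(7+\sqrt{33-8\chi})$, completing the chain.

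The only point requiring care — and the one I would flag as the main (though minor) obstacle — is the reduction in the first paragraph from the optimal Euler characteristic, for which Theorem~\ref{SGZ} is literally stated, to the prescribed $\chi$ of $\mathbb{M}$, together with the observation that $g\ge 3$ so that the factor $\tfrac{2g}{g-2}$ can be bounded by $6$. Once these are in place, everything else is monotonicity bookkeeping in $|G|$, $\gamma$ and $\chi$.
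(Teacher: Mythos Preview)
Your argument is correct and is exactly the paper's approach, which simply says ``combining Theorem~\ref{SGZ}(i) and Proposition~\ref{upper}''. The one wrinkle you flag---passing from the maximal $\chi_0$ back to $\chi$---is better handled by bypassing Theorem~\ref{SGZ} and applying Lemma~\ref{edge} directly to the given surface $\mathbb{M}$ (this yields $ad(G)\le\tfrac{2g}{g-2}(1-\chi/|G|)$ with no hypothesis on the sign of $\chi_0$, whereas your route formally needs $\chi_0\le -1$, which can fail); after that, your monotonicity bookkeeping in $|G|$, $g$ and $\gamma$ goes through unchanged.
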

The next theorem follows by combining Theorem\hspace{-.1cm}\ref{SGZ}(ii) and
 Corollary \ref{aver}.
\begin{theorem}\label{dom}
Let $G$ be a connected graph  $2$-cell embedded  in a surface $\mathbb{M}$ 
 with  $\chi(\mathbb{M}) = \chi \leq -1$.  
 \begin{itemize}
 \item[(i)] If  $\gamma (G) = 2$ then 
 \[
 b(G) \leq 2ad (G) - 1 \leq 11 - \frac{12\chi}{2+\sqrt{6-2\chi}} \ \mbox{when $|G|$ is even, and}
 \]
 \[
 b(G) \leq 2ad (G) - 1 \leq 11 - \frac{12\chi}{2+\sqrt{7-2\chi}} \  \mbox{when $|G|$ is odd.}
 \]
 \item[(ii)] If  $\gamma (G) = \gamma \geq 3$ and $g(G) = g$ then 
\[
b(G) \leq 2ad (G) - 1 \leq 3 + \frac{8}{g-2} - \frac{8g}{g-2}.\frac{\chi}{2\gamma + 1 + \sqrt{9 + 8\gamma - 8\chi}}
\]
\[ 
\leq 11 - \frac{24\chi}{2\gamma + 1 + \sqrt{9 + 8\gamma - 8\chi}} 
\leq 11 - \frac{24\chi}{7 + \sqrt{33 - 8\chi}}.
\]
\end{itemize}
\end{theorem}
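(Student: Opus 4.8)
The plan is to derive Theorem \ref{dom} purely by combining the two inequalities already in hand: Theorem \ref{SGZ}(ii), which bounds $b(G)$ in terms of $g(G)$ and $|G|$, and Corollary \ref{aver}, which bounds $ad(G)$ in terms of $\gamma(G)$, $g(G)$ and $\chi$. Since by \eqref{eq:bprime} we always have $b(G) \leq 2ad(G) - 1$, the whole matter reduces to bounding $2ad(G) - 1$ from above using the domination-number information, and then checking that the cruder constant bounds at the end follow by monotonicity in $\gamma$.

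For part (i), the hypothesis $\gamma(G) = 2$ puts us in the scope of Corollary \ref{aver}(ii). I would simply write $b(G) \leq 2ad(G) - 1 \leq 2\bigl(6 - 6\chi/(2+\sqrt{6-2\chi})\bigr) - 1 = 11 - 12\chi/(2+\sqrt{6-2\chi})$ in the even case, and the analogous computation with $7-2\chi$ in place of $6-2\chi$ in the odd case. That is the entire argument for (i); it is a one-line substitution.

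For part (ii), the hypothesis $\gamma(G) = \gamma \geq 3$ with $g(G) = g$ puts us in the scope of Corollary \ref{aver}(iii), which gives $ad(G) \leq \frac{2g}{g-2}\bigl(1 - \frac{2\chi}{2\gamma+1+\sqrt{9+8\gamma-8\chi}}\bigr)$. Doubling and subtracting $1$ yields $b(G) \leq 2ad(G) - 1 \leq \frac{4g}{g-2} - 1 - \frac{8g}{g-2}\cdot\frac{\chi}{2\gamma+1+\sqrt{9+8\gamma-8\chi}}$, and since $\frac{4g}{g-2} - 1 = 3 + \frac{8}{g-2}$, this is exactly the first displayed bound in (ii). To reach the second displayed bound $11 - \frac{24\chi}{2\gamma+1+\sqrt{9+8\gamma-8\chi}}$, I would use $g \geq 3$: the function $g \mapsto \frac{4g}{g-2}$ is decreasing, so $\frac{4g}{g-2} \leq 12$ and $\frac{8g}{g-2} \leq 24$; since $\chi \leq -1 < 0$ the coefficient $-\chi > 0$, so replacing $\frac{8g}{g-2}$ by $24$ only increases the right-hand side. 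Finally, to get the last constant $11 - \frac{24\chi}{7+\sqrt{33-8\chi}}$, observe that the right-hand side $11 - \frac{24\chi}{2\gamma+1+\sqrt{9+8\gamma-8\chi}}$ is a decreasing function of $\gamma$ (the denominator $2\gamma+1+\sqrt{9+8\gamma-8\chi}$ is increasing in $\gamma$ while the numerator $-24\chi$ is a fixed positive constant), so it is maximized at $\gamma = 3$, giving denominator $7 + \sqrt{33 - 8\chi}$.

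The only step requiring any care is the monotonicity-in-$\gamma$ claim used for the last inequality of (ii), and its analogue in Corollary \ref{aver}(iii): one must be sure that both the linear term $2\gamma$ and the radical $\sqrt{9+8\gamma-8\chi}$ are genuinely increasing in $\gamma$ over the valid range $\gamma \geq 3$ (they plainly are, since the radicand is linear in $\gamma$ with positive slope), and that $-\chi > 0$ so that increasing the denominator decreases the fraction $\frac{24\chi}{\cdot}$ — equivalently increases $-\frac{24\chi}{\cdot}$ — wait, more carefully: $-\frac{24\chi}{D}$ with $-\chi>0$ is $\frac{24(-\chi)}{D}$, which \emph{decreases} as $D$ increases, so $11 - \frac{24\chi}{D} = 11 + \frac{24(-\chi)}{D}$ decreases as $D$ increases, hence is largest at the smallest admissible $D$, i.e. at $\gamma = 3$. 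I expect no real obstacle here; the theorem is essentially an assembly of Theorem \ref{SGZ}(ii), Corollary \ref{aver}, and elementary monotonicity, so the proof can be stated in a few lines.
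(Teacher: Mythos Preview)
Your proposal is correct and follows exactly the approach the paper indicates: the paper's entire proof is the single sentence ``The next theorem follows by combining Theorem~\ref{SGZ}(ii) and Corollary~\ref{aver},'' and you have supplied precisely those substitutions and the elementary monotonicity checks in $g$ and $\gamma$ that make the chain of inequalities explicit.
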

Let us note that the bounds stated in Theorem \ref{dom} are better than the 
one in Theorem \ref{GZ11} whenever  $\gamma (G) \geq 2$. 
Finding  a better upper bound for $b(G)$ than the bound stated in 
Theorem \ref{dom}(ii) could help answer the following question.
\begin{question}
 What is  the maximum number of edges in a connected graph of order $n$, domination number $\gamma$ 
 and girth $g$, where $1 \leq \gamma \leq n/2$ and $g\geq 4$.
\end{question}

\section{Remarks}

Teschner \cite{Teschner}  proved that Conjecture \ref{con1} holds when 
the domination number of  a graph $G$ is not more than $3$. 
\begin{them}[Teschner ~\cite{Teschner}]\label{tend}
Let $G$ be a connected graph.  
\begin{itemize}
\item[(i)] If $\gamma(G) = 1$ then $b(G) = \left\lceil \frac{t}{2}\right\rceil 
\leq \frac{1}{2}\Delta(G) + 1 \leq \frac{3}{2}\Delta(G)$, 
where $t$ is the number of vertices of degree $|G|-1$.
\item[(ii)] If $\gamma(G) = 2$ then $b(G) \leq \Delta(G) + 1 \leq \frac{3}{2}\Delta(G)$.
\item[(iii)] If $\gamma(G) =3$ then $b(G) \leq \frac{3}{2}\Delta(G)$.
\end{itemize}
\end{them}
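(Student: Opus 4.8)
The plan is to treat the three cases by genuinely different arguments, in each case exhibiting an explicit edge set whose deletion raises $\gamma$. For part (i), the first step is the observation that, after deleting an edge set $E'$, the graph $G-E'$ still has domination number $1$ if and only if some vertex remains adjacent to all others, that is, if and only if $E'$ avoids every edge incident to some vertex of degree $|G|-1$. Hence $b(G)$ equals the least number of edges needed to meet every vertex of degree $|G|-1$ (the universal vertices of $G$), of which there are $t\ge 1$. Since any two universal vertices are adjacent, these $t$ vertices span a clique, so a maximum matching inside that clique, together with one further incident edge when $t$ is odd, meets all of them using exactly $\lceil t/2\rceil$ edges; conversely no edge set meeting $t$ vertices can contain fewer than $\lceil t/2\rceil$ edges, so $b(G)=\lceil t/2\rceil$. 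Finally $|G|-1=\Delta(G)$ forces $t\le|G|=\Delta(G)+1$, whence $\lceil t/2\rceil\le\frac12\Delta(G)+1\le\frac32\Delta(G)$.

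For part (ii), let $v$ be a vertex of maximum degree; since $\gamma(G)=2$, $v$ is not universal. If $\gamma(G-v)\ge 2$, then deleting the $\Delta(G)$ edges at $v$ isolates $v$, so the new domination number equals $1+\gamma(G-v)\ge 3$ and we are done with $\Delta(G)$ deletions. Otherwise $G-v$ has a universal vertex $u$; then $u\not\sim v$ (else $u$ would be universal in $G$, contradicting $\gamma(G)=2$) and $d_G(u)=|G|-2=\Delta(G)$. In this case delete the $\Delta(G)$ edges at $v$ together with one edge incident to $u$: a short check of closed neighbourhoods shows that once $v$ is isolated and that extra edge is removed, no vertex of the remaining graph on $|G|-1$ vertices is universal, so its domination number is at least $2$ and the total is again at least $3$, using $\Delta(G)+1$ edges. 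Since $G$ is connected with $\gamma(G)\ge 2$ one has $\Delta(G)\ge 2$, so $\Delta(G)+1\le\frac32\Delta(G)$.

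Part (iii) is the crux, and I expect it to be the main obstacle. Here I would fix a minimum dominating set $D=\{x,y,z\}$ chosen extremally, for instance so as to minimise, over all $\gamma$-sets, the smallest vertex degree occurring in it. Because $\gamma(G)=3$, no pair from $D$ dominates $G$, so each pair leaves some vertex uncovered, and that vertex must lie in the closed neighbourhood of the third member of $D$; this produces, for every $w\in D$, a vertex $w'\in N_G[w]\setminus N_G[D\setminus\{w\}]$. The plan is then a bounded case analysis on the mutual adjacencies among $x,y,z,x',y',z'$. In the easy cases one isolates a single vertex of $D$ of small degree and argues, using the private neighbours, that the truncated graph still needs three dominators; in the tight cases one selects a suitable edge $ab$ whose endpoints have near-maximum degree and deletes about half of $N_G(a)$ together with about half of $N_G(b)$, keeping the count under control through $b(G)\le b_2(G)\le d_G(a)+d_G(b)-1-|N_G(a)\cap N_G(b)|$ from Theorem~\ref{d2}(ii) and the common neighbours forced by the private-neighbour configuration. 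The hardest step, I expect, is ruling out that a deletion of at most $\frac32\Delta(G)$ edges which destroys all three ``old'' dominating sets nevertheless creates a new three-element dominating set; this is precisely the phenomenon that makes the constant $\frac32$ --- attained by $K_3\times K_3$ --- sharp, and it is what forces the detailed structural bookkeeping.
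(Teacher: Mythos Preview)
The paper does not actually prove this theorem; it is quoted verbatim from Teschner~\cite{Teschner} and used only as motivation in Section~5. So there is no in-paper argument to compare against, and your proposal must stand on its own.

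Your part~(i) is correct and is essentially the standard argument. Your part~(ii) is also correct, but the ``short check'' deserves to be made explicit: since $\gamma(G)=2$ forces $\Delta(G)\le |G|-2$, and the universal vertex $u$ of $G-v$ has $d_G(u)=|G|-2$, in fact $\Delta(G)=|G|-2$; hence $v$ has exactly one non-neighbour, and any vertex universal in $G-v$ must be that non-neighbour. Thus $u$ is the \emph{unique} universal vertex of $G-v$, and deleting a single edge at $u$ destroys it without creating another. Without this observation the argument is incomplete, because a priori $G-v$ might have several universal vertices.

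Part~(iii), by your own admission, is not a proof but a plan. You set up the private-neighbour structure $x',y',z'$ correctly and gesture at a case analysis, but the decisive step --- showing that no \emph{new} $3$-element dominating set appears after deleting at most $\tfrac32\Delta(G)$ edges --- is left entirely open. The invocation of $b_2(G)$ from Theorem~\ref{d2}(ii) does not help here: that bound gives $b(G)\le d(a)+d(b)-1-|N(a)\cap N(b)|$, which in general is of order $2\Delta(G)$, not $\tfrac32\Delta(G)$, and there is no mechanism in your outline for producing $\tfrac12\Delta(G)$ common neighbours. Teschner's original proof of~(iii) is a genuinely intricate structural argument; your sketch does not yet contain the key idea that closes it. As written, the proposal establishes~(i) and~(ii) but leaves~(iii) as a gap.
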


Hence it is naturally to turn our attention toward the graphs 
with the domination number at least $4$. 
 By Theorem \ref{dom}(ii)  we have 
\[
b(G) \leq 2ad (G) - 1 \leq 
11 - \frac{24\chi}{9 + \sqrt{41 - 8\chi}}\]
whenever 
 $G$ is a connected graph  $2$-cell embedded  in a surface $\mathbb{M}$,  
 $\chi(\mathbb{M}) = \chi \leq -1$  and $\gamma (G) \geq 4$.
 For a graph $G$ which has $2$-cell embedding on a surface 
 with Euler characteristic $\chi \in \{-2,-3,\dots,-23\}$, we have the
upper bounds shown in Table 1 provided $\gamma (G) \geq 4$.

\begin{table}[h!] 
\centerline {\small
	\begin{tabular}[t]{|| r || r | r | r | r | r | r | r | r | r | r | r | r | ||}
	\hline
	Euler characteristic, $\chi$ & -2 &-3 &-4 & -5 & -6 & -7& -8 & -9 & -10 & -11 & -12\\
	\hline
	$b(G) \leq 2ad (G) - 1 \leq $  & 13 & 15 & 16 & 17 & 18 & 19 & 20 & 22 & 23 & 23 & 24  \\
	\hline
	\hline
		Euler characteristic, $\chi$ & -13& -14 & -15& -16& -17& -18 & -19 & -20 &-21 & -22 & -23 \\
	\hline
	$b(G) \leq 2ad (G) - 1 \leq $  & 25 & 26 & 27 & 28 & 29 & 30 & 30 & 31 & 32 & 33 & 34  \\
	\hline
	\end{tabular} }
	\vspace{.2cm}
	
\caption{\small Constant upper bounds for the bondage number of graphs: $\gamma \geq 4$ and $\chi \in \{-2,-3,\dots,-23\}$ .\label{tab-chi1}}
\end{table}
For the sake of completeness we add the upper bounds presented in section \ref{sver}.
\begin{table}[h!] 
\centerline {\small
	\begin{tabular}[t]{|| r || r | r | r | r | r | }
	\hline
	Euler characteristic, $\chi$ & 2 & 1 & 0 & -1 & -2 \\
	\hline
	$b(G)\leq B^{\prime}(G) \leq $  & 8 & 8 & 9 & 10 & 12  \\
	\hline
	\end{tabular} }
	\vspace{.2cm}
	
\caption{\small  Constant upper bounds for the bondage number of graphs: $\chi \geq -2$ .\label{tab-chi2}}
\end{table}

Recall  that the only known connected graphs for which the equality in Teschner's conjecture holds 
 are $K_n \times K_n$, $n \geq 2$.  We conclude by:
\begin{question}
Is there a connected graph $G$ such that $G \not = K_n \times K_n$ and $b(G) = \frac{3}{2}\Delta (G)$?
\end{question}

\end{document}